\documentclass[a4paper, reqno, 11pt]{amsart}
\usepackage{amssymb}
\usepackage{hyperref}
\hypersetup{colorlinks=true,
linkcolor=blue,
filecolor=purple, 
urlcolor=cyan,
citecolor=magenta
}
\usepackage{enumitem}
\usepackage[centering]{geometry}
\usepackage{mlmodern}
\newtheorem{theorem}{Theorem}[section]
\newtheorem{lemma}[theorem]{Lemma}
\newtheorem{claim}[theorem]{Claim}

\theoremstyle{definition}
\newtheorem{definition}[theorem]{Definition}
\newtheorem{example}[theorem]{Example}
\newtheorem{corollary}[theorem]{Corollary}

\theoremstyle{remark}

\numberwithin{equation}{section}
\allowdisplaybreaks
\begin{document}

\title[On the well-posedness of linear evolution equations]{On the well-posedness of linear evolution equations under unbounded nonautonomous perturbations}

\author[X.-Q. Bui]{Xuan-Quang Bui}
\address{Faculty of Fundamental Sciences, PHENIKAA University, Nguyen Trac Street, Duong Noi Ward, Hanoi 12116, Viet Nam}
\email[corresponding author]{quang.buixuan@phenikaa-uni.edu.vn}
\thanks{The research of X.-Q.~Bui was supported by the Vietnam National Foundation for Science and Technology Development (NAFOSTED) under Grant No. 101.02-2025.56.}

\author[V.T Luong]{Vu Trong Luong}
\address{VNU University of Education, Vietnam National University, Hanoi, 144 Xuan Thuy, Cau Giay, Hanoi, Vietnam} 
\email{vutrongluong@vnu.edu.vn}

\author[N.V.~Minh]{Nguyen Van Minh}
\address{Department of Mathematics and Statistics, University of Arkansas at Little Rock, 2801 S University Ave, Little Rock, AR 72204. USA}
\email{mvnguyen1@ualr.edu}
% -----------------------------
% -----------------------------
\date{\today}
% -----------------------------
\subjclass[2020]{47D06, 	34G10}
% -----------------------------
\keywords{Strongly continuous semigroup; 
nonautonomous perturbation;
generation theorem;
well-posedness}
% -----------------------------
%\dedicatory{}
% =============================
% =============================
\begin{abstract} 
We study conditions for the well-posedness of nonautonomous perturbation of evolution equations of the form
\[
u'(t)=(A+B(t))u(t), \quad t \in [a,b],
\]
where $A$ generates a $\mathrm{C}_0$-semigroup $\left (T(t)\right )_{t\ge 0}$ with $\| T(t)\| \le Me^{\omega_0 t}$, $t\ge 0$, in a Banach space $\mathbb{X}$ and $B(t)$ are $t$-dependent (unbounded) linear operators in $\mathbb{X}$. 
The unbounded perturbation operators $B(t)$ are assumed to belong to a normed space (denoted by $\mathcal{GL}_A (\mathbb{X})$) of unbounded linear operators $C$ in $\mathbb{X}$ such that $D(A) \subset D(C)$ with norm 
\[
\| C\|_A:= (1/M) \sup_{\mu >\omega_0 } \| (\mu-\omega_0) CR(\mu,A)\| <\infty.
\]
We prove that the
above-mentioned evolution equation admits an evolution family if $\| B(\cdot)\|_A$ is continuous in $[a,b]$. 
The evolution family is unique if $B(\cdot)R(\mu, A)$ as a function $[a,b]\to \mathcal{L}(\mathbb{X})$ is continuously differentiable, and
\[
\limsup_{\mu \to\infty} \sup_{t\in [a,b]} \left \|  \frac{d}{dt}[B(t)R(\mu,A)]\right \| <\infty.
\]
Examples are given to illustrate the obtained results.
\end{abstract}
% =============================
% =============================
\maketitle
% =============================
% =============================
\section{Introduction}

The theory of evolution equations in Banach spaces provides a fundamental framework for studying time-dependent processes arising in partial differential equations, mathematical physics, control theory, biological processes, and many other areas of application.
A central object in this theory is the abstract linear evolution equation
\begin{equation}\label{Eq00}
u'(t)=Au(t), \quad t\ge 0,
\end{equation}
where $A$ is a (possibly unbounded) linear operator on a Banach space $\mathbb{X}$. 
It is well known that the well-posedness of evolution equation \eqref{Eq00} is closely related to the generation of a $\mathrm{C}_0$-semigroup on $\mathbb{X}$. 
The theory of $\mathrm{C}_0$-semigroups provides a powerful framework for studying such problems, see, for instance, the classical monographs \cite{engnag, hen, paz}.

A natural question concerns the stability of well-posedness under perturbations of the generator. 
More precisely, one considers the perturbed evolution equation
\begin{equation}\label{EE-Perturbed}
u'(t) = (A+C)u(t), \quad t \geq 0,
\end{equation}
where the linear operator $A$ generates a $\mathrm{C}_0$-semigroup and $C$ is a linear operator on $\mathbb{X}$ represents a certain perturbation. 
Classical results are well established in the case of bounded or relatively bounded perturbations, relying on Variation-of-Constants Formulas and semigroup techniques. 
If $C$ is bounded, the classical bounded perturbation theorem ensures that $A+C$ still generates a $\mathrm{C}_0$-semigroup, and hence the corresponding evolution equation remains well-posed, see \cite{engnag, paz}. 
However, when the perturbation terms lack uniform boundedness or exhibit explicit time dependence, the standard framework is no longer directly applicable, and additional analytical tools are required. 
We refer the reader to  \cite{bat, buihuyluomin, dunsch, engnag, hen, keiwei,meg, miy, voi}, and their references for more information.

Recently, in \cite{buihuyluomin} a new form for an unbounded perturbation problem was proposed. 
In that paper, one investigated sufficient conditions ensuring that the linear operator $A+C$ generates a strongly continuous semigroup. 
The key idea was to apply the Hille--Yosida Theorem to the perturbed linear operator, instead of using the Variation-of-Constants Formula, in order to prove the generation of a $\mathrm{C}_0$-semigroup.
In fact, one assumes that the unperturbed linear evolution equation \eqref{Eq00} is well-posed, that is, $A$ generates a $\mathrm{C}_0$-semigroup $\left (T(t)\right )_{t\ge 0}$, which satisfies the growth condition
\begin{equation}\label{TAsemigroup}
\| T(t)\|\le Me^{\omega_0 t},\quad t\ge 0.
\end{equation}
Then, the main result in \cite{buihuyluomin} shows that if the domain $D(A)\subset D(C)$ and the condition
\begin{equation}\label{1.1}
\sup_{\mu >\omega_0 } \| (\mu-\omega_0) CR(\mu,A)\| <\infty ,
\end{equation}
where \eqref{TAsemigroup} is satisfied, then the perturbed evolution equation \eqref{EE-Perturbed} is also well-posed, that is, $A+C$ generates a $\mathrm{C}_0$-semigroup. 
In other words, it is proved in \cite{buihuyluomin} that if $A$ generates a $\mathrm{C}_0$-semigroup, and if $C$ is an operator such that $D(A)\subset D(C)$, and 
\[
\limsup_{\mu\to+\infty}\| CR(\mu,A)\| <\infty,
\]
then, $A+C$ generates a $\mathrm{C}_0$-semigroup as well.

In various situations the perturbation may also depend explicitly on time. 
This leads to evolution equations of the form
\begin{equation}\label{1.2}
u'(t)=(A+B(t))u(t),\quad t\in [a,b],
\end{equation} 
where $\{B(t)\}_{t \in [a,b]}$ is a family of (possibly unbounded) operators on $\mathbb{X}$.

It is natural to ask under what conditions on the nonautonomous perturbation $B(t)$ the perturbed evolution equation \eqref{1.2} is well-posed.
The analysis of such nonautonomous perturbations is considerably more delicate, since the classical semigroup techniques are no longer directly applicable. 
Understanding conditions under which evolution equation \eqref{1.2} is well-posed therefore becomes an important and challenging problem in the theory of evolution equations.

In the mathematical literature, there are relatively few works dealing with nonautonomous perturbations of evolution equations,
except for \cite{rab} and the references therein.
To our knowledge, the conditions on the perturbation are of Miyadera-type that is different than \eqref{1.1}. 
For more information on Miyadera-type of perturbation of evolution equation the reader is referred to \cite{engnag, tan, tan2, voi} and the references therein.

In this paper we will try to give an answer to the above question by using the results obtained in \cite{buihuyluomin} to construct an evolution family $\left ( U(t,s)\right )_{a\le s\le t\le b}$ associated with Eq.~\eqref{1.2} under a condition inspired by \eqref{1.1}, namely a continuity of the family $B(t)$ with respect to $t\in [a,b]$ in the norm $\| \cdot \|_A$ defined as
\begin{equation}\label{normA}
\| B(t)\|_A:= \frac{1}{M}\sup_{\mu >\omega_0 } \| (\mu-\omega_0) B(t)R(\mu,A)\| <\infty .
\end{equation}
The continuity of $B(\cdot)$ in the norm $\| \cdot\|_A$ implies some sort of continuity of the semigroup $T_{A+B(s)}(t)$ in $s\in [a,b]$ (see Lemma \ref{lem per} below) that is important in our Euler polygon approximation process of the evolution family using piecewise linear evolution equations on $[a,b]$. Under a condition on the continuous differentiability of the family of bounded operators $B(t)R(\mu,A)$, $t \in [a,b]$, we prove that such an evolution family $\left (U(t,s)\right )_{a\le s\le t\le b}$ associated with Eq.~\eqref{1.2} is unique.

Our main idea of constructing the evolution family $\left (U(t,s)\right )_{a\le s\le t\le b}$ is to use the so-called Euler polygon method in Ordinary Differential Equations to prove the existence of a solution to the Cauchy problem. That is, by partitioning the interval $[a,b]$ with 
\[
a=t_0 <t_1<t_2<\ldots <t_N=b,
\]
we approximate Eq.~\eqref{1.2} by a piecewise linear equation 
\begin{equation}\label{1.3}
u'(t)=(A+B(t_j)) u(t),\qquad t_j \le t <t_{j+1}, \quad 0\le j\le N.
\end{equation}
On each subinterval $\left [t_j,t_{j+1}\right )$, it follows from our recent result in \cite{buihuyluomin} that the operator $A+B(t_j)$ generates a $\mathrm{C}_0$-semigroup. Therefore, Eq.~\eqref{1.3} can be integrated on $\left [t_j,t_{j+1}\right )$ by means of this semigroup.
We prove that under continuity condition \ref{ItemA1} on the family $B(t)$ in the sense of \eqref{normA} depends continuously on $t\in [a,b]$, when refining the partition the obtained evolution family from Eq.~\eqref{1.3} will be convergent to an evolution family associated with Eq.~\eqref{1.2}. 
Further, under \ref{ItemA2} on the continuous differentiability of the family of bounded operators $B(t)R(\mu,A), \ t \in [a,b]$ such an evolution family associated with Eq.~\eqref{1.2} is unique.
This result extends the main result obtained in \cite{buihuyluomin}.

Although our approach is also based on the Euler polygon construction used in \cite[Theorem~3.1, Chapter~5]{paz}, the assumptions in the present paper are essentially different. 
In the classical framework, one typically requires stability-type conditions for the operator family together with certain uniform continuity assumptions. 
In contrast, our setting only assumes continuity with respect to the metric of $\mathcal{GL}_A(\mathbb{X})$. 
The analysis is then mainly based on estimates involving the Yosida distance and the distance induced by $\mathcal{GL}_A(\mathbb{X})$. 
This shows that these distances provide a natural and efficient framework for treating perturbation problems in the theory of linear evolution equations. 

The main result of this paper is stated in Theorem~\ref{the main}, where we establish the well-posedness of evolution equations under unbounded nonautonomous perturbations. 
As consequences of this result, we further obtain well-posedness on the whole real line (Theorem~\ref{the R}) and a roughness result for exponential dichotomies (Theorem~\ref{RoughnessEDtheR}).
For illustration, we give some examples at the end of the paper.

\section{Preliminaries}
\subsection{Notations}
Throughout this paper, $\mathbb{X}$ denotes a Banach space. 
As usual, $\mathbb{N}$ and $\mathbb{R}$ stand for the sets of natural numbers and real numbers, respectively.
The space of all bounded  linear operators from a Banach space $\mathbb{X}$ to itself is denoted by $\mathcal{L} (\mathbb{X})$.
The resolvent set of a linear operator $T$ in a Banach space will be denoted by $\rho(T)$.
For $\lambda \in \rho (T)$, we denote the inverse $(\lambda I - T)^{-1}$ by $R(\lambda, T)$ and call it the \textit{resolvent} of the linear operator $T$ at $\lambda$. 
Moreover, $\mathbb{X}^*$ denotes the dual space of $\mathbb{X}$, and we write $\langle x^*, x \rangle$ for the duality pairing between $x \in \mathbb{X}$ and $x^* \in \mathbb{X}^*$, that is,
$\langle x^*, x \rangle = x^*(x)$. 
We use $\varepsilon (h)$ to denote a function that approaches $0$ as $h \to 0$.
Let $A_\lambda$ denote the Yosida approximation of the operator $A$, and let $d_Y(A,B)$ denote the Yosida distance between two possibly unbounded operators, see \cite{buimin} for the definition of this notion.

\subsection{Some distances}
In this subsection, we present the concept of Yosida distance and results about linear perturbation of evolution equations (see \cite{buimin} and references therein for more information on the matter).

Recall that given an operator $A$ (that may be unbounded) in a Banach space $\mathbb{X}$ such that $\rho (A)$ and $ \rho(B)$ contain the ray $[\omega,\infty)$ for some $\omega \in \mathbb{R}$, the \textit{Yosida approximation} $A_\lambda$ of $A$ is defined to be the operator 
\[
\lambda^2 R(\lambda ,A)-I, \quad
\text{ where } \lambda > \omega.
\]

\begin{definition}[{see \cite{buimin, buimin2}}]
The \textit{Yosida distance} between two linear operators $A$ and $B$ satisfying $\rho(A) \supset [\omega, \infty)$ and $ \rho(B) \supset [\omega, \infty)$, where $\omega$ is a given number, is defined to be
\[
d_Y(A,B):=  \limsup_{\lambda\to +\infty}\| A_\lambda -B_\lambda \| = \limsup_{\lambda\to +\infty} \lambda^2\| R(\lambda,A)-R(\lambda, B)\|.
\]
\end{definition}
Let $A$ be the generator of a $\mathrm{C}_0$-semigroup $\left (T(t)\right )_{t \geq 0}$ satisfying 
\[
\| T_A(t)\| \le Me^{\omega_0 t},
\quad
t\ge 0.
\]
\begin{definition}[{see \cite{buihuyluomin}}]
Let us define $\mathcal{GL}_A(\mathbb{X})$ to be the space of all linear operator $C$ on $\mathbb{X}$ with $D(C)= D(A)$ and 
\[
\| C\|_A:=\frac{1}{M} \sup_{\mu >\omega_0 } \| (\mu-\omega_0) CR(\mu,A)\| <\infty .
\]
\end{definition}
The space $\mathcal{GL}_A(\mathbb{X})$ is a normed space (with norm $\|\cdot\|_A$) that includes $\mathcal{L}(\mathbb{X})$ and some classes of unbounded operators. 
Examples of unbounded operators will be provided in the Section~\ref{Sect-Exa} below. 
If $B,\,C\in \mathcal{GL}_A(\mathbb{X})$ another distance can be naturally defined 
\[
d_M(B,C):=\| B-C\|_A.
\]
Below is a relation between these distances.
\begin{lemma}
The following assertions are valid:
\begin{enumerate}
\item
Let $B$ and $C$ be the generators of $\mathrm{C}_0$-semigroups. Assume further that $D(B)= D(C)$. Then, $B=C$, provided that 
$d_{Y}(B, C)=0$.
\item\label{MQ-Lem12}
Let $B,\,C\in \mathcal{GL}_A(\mathbb{X})$. Then,
\[
d_Y(B,C) \le \| B-C\|_A .
\]
\item\label{MQ-Lem13}
Let $B,\, C\in \mathcal{L}(\mathbb{X})$. Then 
\[
d_{Y}(B, C)= \| B-C\|_A = \| B-C\| .
\]
\end{enumerate}
\end{lemma}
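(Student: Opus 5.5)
We prove the three items separately. Throughout we use the Yosida approximation in the form
\[
T_\lambda=\lambda^2R(\lambda,T)-\lambda I=\lambda TR(\lambda,T),
\]
the version consistent with the formula for $d_Y$ in the definition. With this choice $T_\lambda-S_\lambda=\lambda^2\bigl(R(\lambda,T)-R(\lambda,S)\bigr)$. In item (2) we read $d_Y(B,C)$ as $d_Y(A+B,A+C)$. By the result of \cite{buihuyluomin}, both $A+B$ and $A+C$ are generators, so their resolvents exist on a common ray.

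\emph{Item (1).} Let $B$ and $C$ be generators and fix $x\in D(B)=D(C)$. For a generator, $B_\lambda x=\lambda R(\lambda,B)Bx\to Bx$ as $\lambda\to\infty$, because $\lambda R(\lambda,B)\to I$ strongly. The same holds for $C$. Hence
\[
\|Bx-Cx\|=\lim_{\lambda\to\infty}\|(B_\lambda-C_\lambda)x\|\le \limsup_{\lambda\to\infty}\|B_\lambda-C_\lambda\|\,\|x\|=d_Y(B,C)\|x\|=0.
\]
So $B=C$ on their common domain, and therefore $B=C$.

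\emph{Item (2).} For $\lambda$ large we have
\[
\|CR(\lambda,A)\|\le \frac{M\|C\|_A}{\lambda-\omega_0}<1 .
\]
Hence $I-CR(\lambda,A)$ is invertible by a Neumann series, and
\[
R(\lambda,A+C)=R(\lambda,A)\bigl(I-CR(\lambda,A)\bigr)^{-1}.
\]
Moreover $\|(I-CR(\lambda,A))^{-1}\|\to1$ as $\lambda\to\infty$, and the same facts hold for $B$. The resolvent identity gives
\[
R(\lambda,A+B)-R(\lambda,A+C)=R(\lambda,A+B)(B-C)R(\lambda,A)\bigl(I-CR(\lambda,A)\bigr)^{-1}.
\]
We estimate the factors separately:
\begin{itemize}
\item $\|R(\lambda,A+B)\|\le \frac{M}{\lambda-\omega_0}(1+o(1))$;
\item $\|(B-C)R(\lambda,A)\|\le \frac{M\|B-C\|_A}{\lambda-\omega_0}$.
\end{itemize}
Multiplying by $\lambda^2$ and letting $\lambda\to\infty$ yields
\[
d_Y(A+B,A+C)\le M^2\|B-C\|_A .
\]

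This is the main obstacle. The naive estimate produces the constant $M^2$, not $1$. I would first try to avoid bounding $\|R(\lambda,A+B)\|$ crudely, and instead combine $\lambda R(\lambda,A+B)$ with $\lambda(B-C)R(\lambda,A)$ before taking norms. If that fails, I would renorm $\mathbb{X}$ equivalently so that $T$ becomes quasi-contractive, i.e.\ $M=1$. Then I would check whether the stated inequality is meant in that normalization, since $\|\cdot\|_A$ carries the factor $1/M$. When $M=1$ the argument above gives exactly $d_Y\le\|B-C\|_A$.

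\emph{Item (3).} Let $B,C\in\mathcal L(\mathbb{X})$. Then $\lambda R(\lambda,B)=(I-B/\lambda)^{-1}\to I$ in operator norm, so $B_\lambda=\lambda BR(\lambda,B)\to B$ in norm, and likewise for $C$. Hence $d_Y(B,C)=\lim_{\lambda\to\infty}\|B_\lambda-C_\lambda\|=\|B-C\|$.

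For $\|B-C\|_A$, put $D=B-C$. The upper bound $\|(\mu-\omega_0)DR(\mu,A)\|\le M\|D\|$ gives $\|D\|_A\le\|D\|$. For the lower bound, $(\mu-\omega_0)R(\mu,A)x\to x$, so
\[
\|Dx\|=\lim_{\mu\to\infty}\|(\mu-\omega_0)DR(\mu,A)x\|\le M\|D\|_A\|x\|,
\]
which gives $\|D\|_A\ge \|D\|/M$. Thus $\|D\|_A=\|D\|$ exactly when $M=1$, and the same normalization issue as in item (2) reappears here. It would be resolved in the same way.
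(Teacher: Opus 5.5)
Your arguments are correct, but they follow a different route from the paper, whose proof is purely by citation. The paper cites Pazy for (1), the earlier generation paper for (2), and the Yosida-distance paper for $d_Y(B,C)=\|B-C\|$ with $B,C$ bounded. It then obtains $\|B-C\|_A=\|B-C\|$ from (2) via the sandwich $\|B-C\|=d_Y(B,C)\le\|B-C\|_A\le\|B-C\|$. Your self-contained proofs are fine: (1) via $B_\lambda x\to Bx$ on the common domain, and $d_Y(B,C)=\|B-C\|$ via $\lambda R(\lambda,B)\to I$ in norm. Your two-sided bound $M^{-1}\|D\|\le\|D\|_A\le\|D\|$, obtained from $(\mu-\omega_0)R(\mu,A)\to I$ strongly, has two advantages over the paper's route: it does not depend on (2), and it shows exactly where $M$ enters.

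The obstacle you hit in (2) cannot be removed by combining factors more cleverly. For $M>1$, item (2) (under either reading of $d_Y(B,C)$) and the second equality in (3) are false in general. Take $\mathbb{X}=\mathbb{C}$, $A=0$ with the admissible bound $M=2$, $\omega_0=0$, and $B=0$, $C=I$. Then $\|B-C\|_A=\tfrac12$, while $d_Y(A+B,A+C)=d_Y(B,C)=\lim_{\lambda\to\infty}\lambda/(\lambda-1)=1$.

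This is not an artifact of choosing $M$ too large. For the rotation group on $(\mathbb{C}^2,\|\cdot\|_1)$ with $\omega_0=0$, the optimal constant is $M=\sqrt2$. There $\|I\|_A=\tfrac{1}{\sqrt2}\sup_{\mu>0}\tfrac{\mu(\mu+1)}{\mu^2+1}=\tfrac{2+\sqrt2}{4}<1=d_Y(A+I,A)$.

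So in general what holds is what you proved: $d_Y(A+B,A+C)\le M^2\|B-C\|_A$ and $M^{-1}\|D\|\le\|D\|_A\le\|D\|$. The lemma as written is valid exactly under the normalization $M=1$, where your computation gives constant $1$ in (2) and equality in (3).

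Renorming $\mathbb{X}$ to reach $M=1$ does not prove the inequality in the original norm, since both $d_Y$ and $\|\cdot\|_A$ change under the renorming. It is, however, precisely the normalization $\|T_A(t)\|\le 1$ that the paper adopts just before the main theorem, which is where (2) is actually used. The right fix is therefore to add $M=1$ as a hypothesis of (2) and (3), not to look for a sharper estimate.
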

\begin{proof}
\begin{enumerate}
\item This is well known, see \cite{paz}.

\item This estimate was proved in \cite{buihuyluomin}.

\item When $B,\,C \in \mathcal{L}(\mathbb{X})$, it is proved in \cite{buimin2} that $d_Y(B,C)=\| B-C\|$. Combining this and Part~\eqref{MQ-Lem12} proves Part~\eqref{MQ-Lem13}.
\end{enumerate}
The proof is complete.
\end{proof}

\subsection{Autonomous perturbation} 
Let $\mathbb{X}$ be a Banach space. Below we fix an operator $A$ as the generator of a $\mathrm{C}_0$-semigroup $\left (T_A(t)\right )_{t\ge 0}$ in $\mathbb{X}$ with $\left \| T_A(t)\right \| \le Me^{\omega_0 t}$, $t\ge 0$.

The following generation theorem for pertubed semigroups was proved in \cite{buihuyluomin}  that is the basis for our paper:
\begin{theorem}[{see \cite{buihuyluomin}}]
\label{the gen}
If $C\in \mathcal{GL}_A(\mathbb{X})$, then the linear operator $A+C$ generates a $\mathrm{C}_0$-semigroup $\left (T_{A+C}(t)\right )_{t\ge 0}$ with 
\[
\left \| T_{A+C}(t)\right \| \le Me^{\left (\omega_0 +M^2 \| C\|_A\right )t}, \quad t\ge 0.
\]
\end{theorem}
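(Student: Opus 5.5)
We plan to verify the Hille--Yosida conditions directly for $A+C$ on $D(A+C)=D(A)$, using a Neumann series for the resolvent. Fix $\mu>\omega_0$ and write
\[
\mu-(A+C)=\bigl(I-CR(\mu,A)\bigr)(\mu-A)\quad\text{on } D(A).
\]
By the definition of $\|C\|_A$ we have $\|CR(\mu,A)\|\le M\|C\|_A/(\mu-\omega_0)$. Hence for $\mu>\omega_0+M\|C\|_A$ the operator $I-CR(\mu,A)$ is invertible in $\mathcal{L}(\mathbb{X})$. Consequently $\mu\in\rho(A+C)$, with
\[
R(\mu,A+C)=R(\mu,A)\sum_{k\ge 0}\bigl(CR(\mu,A)\bigr)^k .
\]
Closedness of $A+C$ follows from this invertibility. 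Density of the domain is automatic, since $D(A+C)=D(A)$.

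The main step is the Hille--Yosida estimate for all powers,
\[
\|R(\mu,A+C)^n\|\le \frac{M}{(\mu-\omega)^n},
\]
with $\omega=\omega_0+M^2\|C\|_A$. A single-resolvent bound is insufficient because $M\ne 1$. We would first renorm: the norm $|x|:=\sup_{t\ge0}\|e^{-\omega_0 t}T_A(t)x\|$ satisfies $\|x\|\le|x|\le M\|x\|$, and in it $T_A$ is quasi-contractive with exponent $\omega_0$. Thus $|R(\mu,A)|\le 1/(\mu-\omega_0)$. Moreover $|CR(\mu,A)|\le M\|CR(\mu,A)\|\le M^2\|C\|_A/(\mu-\omega_0)$, which accounts for the factor $M^2$. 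Then, in $|\cdot|$,
\[
|R(\mu,A+C)|\le \frac{1}{\mu-\omega_0}\cdot\frac{1}{1-\frac{M^2\|C\|_A}{\mu-\omega_0}}=\frac{1}{\mu-\omega_0-M^2\|C\|_A}.
\]
Iterating gives $|R(\mu,A+C)^n|\le(\mu-\omega)^{-n}$. The Hille--Yosida theorem in the renormed space then yields a quasi-contraction semigroup with $|T_{A+C}(t)|\le e^{\omega t}$. Returning to the original norm costs exactly the factor $M$, which gives $\|T_{A+C}(t)\|\le Me^{\omega t}$.

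The obstacle I expect is the middle step. We must check that the supremum in $\|C\|_A$, taken in the original norm, transfers correctly under renorming. The estimate $|Sx|\le M\|Sx\|\le M\|S\|\,\|x\|\le M\|S\|\,|x|$ shows it does. We must also check that the resolvent series converges for all $\mu>\omega$, not merely for large $\mu$. This holds because the renormed bound on $CR(\mu,A)$ is already below $1$ on that whole range.
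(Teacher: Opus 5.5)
Your argument is correct and follows the route the paper attributes to \cite{buihuyluomin}; the paper itself only cites this result and describes that method as applying the Hille--Yosida theorem directly to $A+C$, via the factorization $\mu-(A+C)=(I-CR(\mu,A))(\mu-A)$ that reappears as \eqref{a1}. Your renorming to a norm in which $T_A$ is quasi-contractive is the standard way to handle $M>1$, and it accounts exactly for the growth bound $\omega_0+M^2\|C\|_A$ in the statement.
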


\section{Well-posedness of evolution equations under unbounded nonautonomous perturbations}
This section is devoted to the presentation of the main results of the paper. After establishing several auxiliary lemmas, we state and prove the well-posedness of evolution equations under unbounded nonautonomous perturbations.

\subsection{Auxiliary lemmas}
We consider the perturbation of the well-posed evolution equation
\[
u'(t)=Au(t),
\]
by a nonautonomous perturbation $B(t)$, that is,
\begin{equation}\label{2}
u'(t)=(A+B(t))u(t), \quad t\in [a,b],
\end{equation}
where the linear operator $A$ is assumed to generate a $\mathrm{C}_0$-semigroup $\left (T(t)\right )_{t\ge 0}$ with
\begin{equation}\label{3}
\| T(t)\| \le Me^{\omega_0 t}, \quad t\ge 0.
\end{equation}

We will use the following assumptions as conditions for the existence and uniqueness of solutions to the Cauchy problem associated with \eqref{2}.

\begin{definition}
Given a $\mathrm{C}_0$-semigroup $\left (T(t)\right )_{t\ge 0}$ satisfying \eqref{3}, suppose that for each $t\in [a,b]$ an operator $B(t): D(B(t))= D(A) \to \mathbb{X}$ is defined.
Then, Eq.~\eqref{2} is said to satisfy 
\begin{enumerate}[label=\textnormal{\bf Assumption~(A\arabic*)},
ref=\textnormal{Assumption~(A\arabic*)},
leftmargin=*]
\item\label{ItemA1}
if the mapping $B(\cdot)\colon [a,b]\to  \mathcal {GL}_A(\mathbb{X})$ is continuous.
\item\label{ItemA2}
if the mapping $B(\cdot)R(\mu , A) \colon [a,b]\to \mathcal{L}(\mathbb{X})$ is continuously differentiable, and
\begin{equation*}
\limsup_{\mu \to\infty} \sup_{t\in [a,b]}\left \|  \frac{d}{dt}[B(t)R(\mu,A)]\right \| <\infty .
\end{equation*}
\end{enumerate}
\end{definition}

\begin{lemma}\label{lem 3.2}Let $B(\cdot)$ satisfy
\ref{ItemA1} and \ref{ItemA2}.
Then,
\begin{equation*}
\limsup_{\mu\to\infty}\sup_{t\in [a,b]}  \left\| \frac{d}{dt} R(\mu, A+B(t)) \right\|  =0.
\end{equation*}
\end{lemma}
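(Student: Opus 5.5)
We will differentiate the resolvent through the factorization
\[
\mu - A - B(t) = \bigl(I - B(t)R(\mu,A)\bigr)(\mu - A), \qquad \mu>\omega_0 ,
\]
which holds on $D(A)=D(B(t))$. By \ref{ItemA1} the map $t\mapsto \|B(t)\|_A$ is continuous on the compact interval $[a,b]$, so $K:=\sup_{t\in[a,b]}\|B(t)\|_A<\infty$. The definition of $\|\cdot\|_A$ then gives the uniform bound
\[
\|B(t)R(\mu,A)\|\le \frac{MK}{\mu-\omega_0}, \qquad t\in[a,b],\ \mu>\omega_0 .
\]
Hence there is $\mu_0$ such that $\|B(t)R(\mu,A)\|\le 1/2$ for all $\mu\ge\mu_0$ and all $t$. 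For such $\mu$ the operator $I-B(t)R(\mu,A)$ is invertible by a Neumann series, with inverse of norm at most $2$. Consequently
\[
R(\mu,A+B(t)) = R(\mu,A)\bigl(I-B(t)R(\mu,A)\bigr)^{-1}.
\]
This agrees with the resolvent of the generator from Theorem~\ref{the gen}, since both are inverses of $\mu-A-B(t)$ for large $\mu$.

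Next we differentiate in $t$. Set $F_\mu(t):=B(t)R(\mu,A)$. By \ref{ItemA2}, $F_\mu\colon[a,b]\to\mathcal{L}(\mathbb{X})$ is $C^1$. Inversion is smooth on the open set of invertible operators, with derivative $-G^{-1}\dot G G^{-1}$. Therefore $t\mapsto (I-F_\mu(t))^{-1}$ is $C^1$, and
\[
\frac{d}{dt}R(\mu,A+B(t)) = R(\mu,A)\bigl(I-F_\mu(t)\bigr)^{-1} F_\mu'(t)\bigl(I-F_\mu(t)\bigr)^{-1}.
\]

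Finally we estimate, using $\|R(\mu,A)\|\le M/(\mu-\omega_0)$:
\[
\sup_{t\in[a,b]}\Bigl\|\frac{d}{dt}R(\mu,A+B(t))\Bigr\| \le \frac{4M}{\mu-\omega_0}\,\sup_{t\in[a,b]}\|F_\mu'(t)\| .
\]
By \ref{ItemA2}, $\limsup_{\mu\to\infty}\sup_t\|F_\mu'(t)\|<\infty$, so this supremum is bounded by some constant $L$ for all large $\mu$. The right-hand side is then at most $4ML/(\mu-\omega_0)\to 0$, which proves the lemma. Note that the $\limsup$ is actually $0$ by a full factor $1/\mu$, so the bound has room to spare.

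The only delicate points are the following. First, the smallness of $\|B(t)R(\mu,A)\|$ must be uniform in $t$; this is exactly where the continuity in \ref{ItemA1} (hence boundedness on the compact interval) is used. Second, we need to confirm that the factorized formula gives the resolvent of the generator of Theorem~\ref{the gen}; this is immediate since resolvents are unique. No other obstacles are expected.
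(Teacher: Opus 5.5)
Your proposal is correct and follows essentially the same route as the paper: the factorization $R(\mu,A+B(t))=R(\mu,A)\bigl(I-B(t)R(\mu,A)\bigr)^{-1}$, the derivative-of-inverse formula, and the bound $\|R(\mu,A)\|\,\|(I-B(t)R(\mu,A))^{-1}\|^{2}\,\|\tfrac{d}{dt}[B(t)R(\mu,A)]\|$ with $K=\sup_t\|B(t)\|_A<\infty$ from \ref{ItemA1}. Your write-up is slightly cleaner than the paper's. You write the inverse correctly as $(I-B(t)R(\mu,A))^{-1}$, where the paper writes $R(\mu,B(t)R(\mu,A))$. You also keep the factor $M$ and use the right decay $\|R(\mu,A)\|\le M/(\mu-\omega_0)$, and you justify invertibility uniformly in $t$ for large $\mu$ via a Neumann series.
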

\begin{proof}
We can verify the identity
\begin{equation}\label{a1}
R(\mu,A+B(t))=R(\mu,A)[I-B(t)R(\mu,A)] ^{-1}.
\end{equation}
and
\begin{equation*}
\frac{d}{dt}[I-B(t)R(\mu,A)] ^{-1}=	R(\mu, B(t)R(\mu,A))\cdot \frac{d}{dt} [B(t) R(\mu,A)] \cdot R(\mu, B(t)R(\mu,A)).
\end{equation*}
Therefore,
\begin{align*}
& \frac{d}{dt}	R(\mu,A+B(t))
\\ 
&=
R(\mu,A) \frac{d}{dt}[I-B(t)R(\mu,A)] ^{-1}\\
&= R(\mu,A) 
\left [
R(\mu, B(t)R(\mu,A)) \cdot \frac{d}{dt}[B(t)R(\mu,A)] \cdot R(\mu, B(t)R(\mu,A))
\right ].
\end{align*}
Consequently, if we denote
\[
K :=
\sup_{t\in [a,b]} \| B(t)\|_A= \sup_{t\in [a,b]}  \frac{1}{M}\sup_{\mu>\omega_0} (\mu-\omega_0) \| B(t)R(\mu,A)\| ,
\]
then, by \ref{ItemA1}, $K<\infty$. Next, by \ref{ItemA2}, we have
\[
\limsup_{\mu \to\infty} \sup_{t\in [a,b]} \left \|  \frac{d}{dt}[B(t)R(\mu,A)]\right \| <\infty .
\]
Therefore,
\begin{align*}
& \left\| 	\frac{d}{dt}	R(\mu,A+B(t)) \right\| 
\\
&\le 
\| R(\mu,A)\| \cdot \| R(\mu, B(t)R(\mu,A))\|^2 \cdot \left \| \frac{d}{dt}[B(t)R(\mu,A)]\right \| \\
&\le \frac{M}{(\mu-\omega_0)^2}  \cdot \| R(\mu, B(t)R(\mu,A))\|^2 \cdot \sup_{t\in [a,b]}\left \|  \frac{d}{dt}[B(t)R(\mu,A)]\right \|\\
&\le  \frac{M}{(\mu-\omega_0)^2} \cdot \frac{1}{(1-K/(\mu-\omega_0))^2}  \cdot \sup_{t\in [a,b]}\left \| \frac{d}{dt}[B(t)R(\mu,A)] \right \|.
\end{align*}
Clearly,
\[
\lim_{\mu\to\infty}  \sup_{t\in [a,b]} \left\| 	\frac{d}{dt}	R(\mu,A+B(t)) \right\|  =0.
\]
The proof is complete.
\end{proof}

Below we will need some preparatory results that are the key to estimating approximation processes later.
\begin{lemma}\label{lem key}
Let $a_1,a_2,\ldots, a_N$ and $b_1,b_2,\ldots, b_N$ be two sequences of bounded linear operators in a Banach space such that for each $ 1\le j\le N$,
\[
0\le 	\| a_j \| \le K, \quad
0\le \|	b_j \| \le K, \quad
\| a_j-b_j\|  \le\delta,
\]
where $K \geq 1$ and $\delta > 0$ are some positive constants depending only on the sequences.
Then,
\[
\left\| 	\prod_{j=1}^N a_j -	\prod_{j=1}^N b_j\right\| \le N\delta K^{N-1}.
\]
\end{lemma}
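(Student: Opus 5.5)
The plan is a telescoping decomposition of the difference of the two products. For $0\le k\le N$ introduce the hybrid products
\[
P_k := b_1 b_2\cdots b_k\, a_{k+1}\cdots a_N ,
\]
with the convention that an empty product is the identity. Then $P_0=\prod_{j=1}^N a_j$ and $P_N=\prod_{j=1}^N b_j$, so
\[
\prod_{j=1}^N a_j-\prod_{j=1}^N b_j=\sum_{k=1}^{N}(P_{k-1}-P_k).
\]
This identity holds for noncommuting operators as well, because in each hybrid product the factors keep their original order.

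Consecutive terms differ in only one factor:
\[
P_{k-1}-P_k = b_1\cdots b_{k-1}\,(a_k-b_k)\,a_{k+1}\cdots a_N .
\]
By submultiplicativity of the operator norm, and the bounds $\|a_j\|\le K$, $\|b_j\|\le K$, $\|a_k-b_k\|\le\delta$, we obtain
\[
\|P_{k-1}-P_k\|\le K^{k-1}\,\delta\,K^{N-k}=\delta K^{N-1}.
\]
Summing these $N$ bounds with the triangle inequality gives the claimed estimate $N\delta K^{N-1}$.

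There is no real obstacle here. The only points needing care are the bookkeeping of the factor order in the telescoping sum and the empty-product conventions at $k=1$ and $k=N$. The argument uses neither $K\ge 1$ nor $\delta>0$; these hypotheses presumably matter only in the later applications. An equivalent route is induction on $N$ via
\[
\prod_{j=1}^{N} a_j-\prod_{j=1}^{N} b_j=\Big(\prod_{j=1}^{N-1} a_j-\prod_{j=1}^{N-1} b_j\Big)a_N+\Big(\prod_{j=1}^{N-1} b_j\Big)(a_N-b_N).
\]
This yields the bound $(N-1)\delta K^{N-2}\cdot K+K^{N-1}\delta$, which again equals $N\delta K^{N-1}$.
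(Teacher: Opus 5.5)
Your proof is correct and is essentially the paper's argument: the paper inducts on $N$ via $\prod_{j=1}^{m+1}a_j-\prod_{j=1}^{m+1}b_j = a_1\bigl(\prod_{j=2}^{m+1}a_j-\prod_{j=2}^{m+1}b_j\bigr)+(a_1-b_1)\prod_{j=2}^{m+1}b_j$, and your telescoping sum over hybrid products is that induction unrolled. Your alternative induction splits off the last factor rather than the first, which makes no difference, and you are right that neither $K\ge 1$ nor $\delta>0$ is used.
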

\begin{proof}
We will prove this claim by induction. When $N=1$ the claim is obvious. Assume that the claim holds for $N=m$. We will prove it is true for $N=m+1$.
By the induction assumption, we have
\begin{align*}
\left\| 	\prod_{j=1}^{m+1} a_j -	\prod_{j=1}^{m+1} b_j\right\|  &=	\left\| 	\prod_{j=1}^{m+1} a_j -	 a_1 \prod_{j=2}^{m} b_j+      a_1\prod_{j=2}^{m} b_j  -  \prod_{j=1}^{m+1} b_j\right\|  \\
&\le \|  a_1 \|  \left( \left\| 	\prod_{j=2}^m a_j -	  \prod_{j=2}^m b_j \right\| \right) +     \| a_1-b_1\|  \prod_{j=2}^m \| b_j \| \\
&\le K\times m\delta K^{m-1} + \delta K^m\\
&= (m+1)\delta K^m .
\end{align*}
This completes the proof of the claim.
\end{proof}

\begin{lemma}\label{lem per}
Let $G$ and $H$ be the generators of $\mathrm{C}_0$-semigroups such that
\begin{enumerate}
\item the Yosida distance $d_Y(G,H)$ is finite;
\item there exist positive constants $M$ and $\omega$ such that for all $t\ge 0$, 
\[
\| T_G(t)\| \le Me^{\omega t}, 
\quad
\| T_H(t)\| \le Me^{\omega t}.
\]
\end{enumerate} 
Then, for each $0\le s\le t$, $x\in \mathbb{X}$, the following estimate holds
\begin{equation}\label{ab}
\left \| T_G(t-s)x-T_H(t-s)x \right \| \le
|t-s|M^2e^{4\omega |t-s|}d_Y(G,H) \| x\|.
\end{equation}
\end{lemma}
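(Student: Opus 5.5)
We compare the two semigroups through their Yosida approximations and then pass to the limit. For $\lambda>\omega$ let $G_\lambda=\lambda^2R(\lambda,G)-\lambda I$ be the usual Yosida approximant. This differs from the paper's normalisation only by a scalar shift, and the same shift occurs for $H$, so $G_\lambda-H_\lambda=\lambda^2[R(\lambda,G)-R(\lambda,H)]$. Hence
\[
\limsup_{\lambda\to\infty}\|G_\lambda-H_\lambda\|=d_Y(G,H).
\]
We use two standard facts from Hille--Yosida theory (see \cite{paz}):
\begin{itemize}
\item[(i)] $e^{\tau G_\lambda}x\to T_G(\tau)x$ as $\lambda\to\infty$, for every $x\in\mathbb{X}$, uniformly for $\tau$ in compact sets, and likewise for $H$;
\item[(ii)] the growth bounds on the semigroups give the renorming-free estimates $\|\lambda^kR(\lambda,G)^k\|\le M(\lambda/(\lambda-\omega))^k$.
\end{itemize}
Expanding $e^{\tau G_\lambda}=e^{-\lambda\tau}\sum_k (\lambda^2\tau)^kR(\lambda,G)^k/k!$ and using (ii) yields
\[
\|e^{\tau G_\lambda}\|\le M\exp\Big(\frac{\omega\lambda\tau}{\lambda-\omega}\Big)\le Me^{2\omega\tau}\quad\text{for }\lambda\ge 2\omega,
\]
and the same bound holds for $H_\lambda$.

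Next, since $G_\lambda$ and $H_\lambda$ are bounded, we use the Duhamel identity. With $\tau=t-s$,
\[
e^{\tau G_\lambda}x-e^{\tau H_\lambda}x=\int_0^\tau e^{(\tau-r)G_\lambda}(G_\lambda-H_\lambda)e^{rH_\lambda}x\,dr,
\]
which follows by differentiating $r\mapsto e^{(\tau-r)G_\lambda}e^{rH_\lambda}x$. Inserting the bounds from the previous step gives
\[
\|e^{\tau G_\lambda}x-e^{\tau H_\lambda}x\|\le \tau M^2e^{2\omega\tau}\|G_\lambda-H_\lambda\|\,\|x\|.
\]
Taking $\limsup_{\lambda\to\infty}$ and using the strong convergence in (i) on the left-hand side yields
\[
\|T_G(\tau)x-T_H(\tau)x\|\le\tau M^2e^{2\omega\tau}d_Y(G,H)\|x\|.
\]
This is stronger than \eqref{ab}, because $e^{2\omega\tau}\le e^{4\omega\tau}$ when $\omega>0$.

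The main obstacle is the uniform exponential bound on $e^{\tau G_\lambda}$ with constant $M$ rather than $M^k$. This needs the power estimate (ii), which comes from integrating the semigroup representation $R(\lambda,G)^k x=\frac1{(k-1)!}\int_0^\infty r^{k-1}e^{-\lambda r}T_G(r)x\,dr$. The crude route through the paper's normalisation $A_\lambda=\lambda^2R(\lambda,A)-I$ only produces a worse exponent, possibly of the form $e^{4\omega\tau}$, which is perhaps why the authors state that weaker constant. In any event, the scalar shift between the two normalisations cancels in the difference $G_\lambda-H_\lambda$, so the argument does not depend on which one is used. Finally, finiteness of $d_Y(G,H)$ ensures that the $\limsup$ taken above is a finite bound; for $\lambda$ large, $\|G_\lambda-H_\lambda\|\le d_Y(G,H)+\varepsilon$, and we then let $\varepsilon\to0$.
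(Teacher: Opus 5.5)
Your proof is correct and follows the paper's route: the variation-of-constants identity for the bounded Yosida approximants, the bound $\|e^{\tau G_\lambda}\|\le Me^{2\omega\tau}$, and then $\lambda\to\infty$. By evaluating the two exponential factors at their actual times $\tau-r$ and $r$ inside the integral, you even obtain the sharper factor $e^{2\omega\tau}$ in place of the paper's $e^{4\omega\tau}$.

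One minor caveat on your side remark: only $d_Y$ is insensitive to the scalar shift. The exponential bound and the convergence $e^{\tau G_\lambda}x\to T_G(\tau)x$ genuinely require the standard approximant $\lambda^2R(\lambda,G)-\lambda I$ that you used. With $\lambda^2R(\lambda,G)-I$ the exponentials would carry a divergent factor $e^{(\lambda-1)\tau}$, so the argument does depend on the normalisation.
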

\begin{proof}
The proof is based on deriving estimates for the Yosida approximations and then passing to the limit.
To this end, we first consider bounded operators.

Let $C$ and $D$ be two bounded linear operators in a Banach space $\mathbb{X}$. We will estimate the growth of
\[
e^{(t-s)C}-e^{(t-s)D}.
\]
Applying the Variation-of-Constants Formula to the evolution equation
\[
x'(t)=Cx(t)+(D-C)x(t)
\]
and by setting 
\[
x(t)=e^{(t-s)D}x
\]
we have
\[
x(t)=e^{(t-s)C}x+\int^t_s e^{(t-\xi)C}(D-C)x(\xi)d\xi.
\]
For each $t\ge s$, we have
\begin{align*}
\left \|e^{(t-s)C}x-e^{(t-s)D}x\right \|
&\le
\int^t_s \left \| e^{(t-\xi)C}(D-C)e^{\xi D}x\right \|d\xi
\\
&\le
(t-s)\| C-D\|  
\left \| e^{tC}\right \|
\left \| e^{tD}\right \|
\| x\|.
\end{align*}

Now if we let $G_\lambda$ and $H_\lambda$ be the Yosida approximations of $G$ and $H$, then, (see, e.g., Pazy~\cite[(5.25)]{paz}) as
\[
\left \| e^{tG_\lambda } \right \|
\le  Me^{2\omega t},
\quad
\left \| e^{tH_\lambda }\right \|
\le  Me^{2\omega t}
\]
we have
\[
\left \| e^{(t-s)G_\lambda}x-e^{(t-s)H_\lambda }x\right \|
\le 	(t-s)\| G_\lambda-H_\lambda \|  
\left \| e^{(t-s)G_\lambda}\right \|
\left \| e^{(t-s)H_\lambda}\right \|
\| x\|.
\]
Therefore,
\begin{align*}
\limsup_{\lambda \to +\infty} \left \| e^{(t-s)G_\lambda}x-e^{(t-s)H_\lambda }x\right \| & \le
(t-s)M^2e^{4\omega (t-s)} \limsup_{\lambda \to\infty } 
\left \| G_\lambda  - H_\lambda \right \|   \| x\| 
\\
&=
(t-s)M^2e^{4\omega (t-s)} d_{Y}(G,H)\| x\|.
\end{align*}
Consequently, as 
\begin{align*}
\lim_{\lambda \to +\infty}e^{(t-s)G_\lambda}x &=T_G(t-s)x,\\
\lim_{\lambda \to +\infty}e^{(t-s)H_\lambda}x &=T_H(t-s)x
\end{align*}
the estimate \eqref{ab} is valid.
\end{proof}

\subsection{Main results}
We are now in a position to state and establish the well-posedness of the evolution equation in the presence of unbounded perturbations.

\begin{definition}
Equation~\eqref{2} is said to \textit{admit an evolution family $\left (U(t,s)\right )_{a\le s\le t\le b}$} if $\left (U(t,s)\right )_{a\le s\le t\le b}$ is a family of bounded linear operators in a Banach space $\mathbb{X}$ that satisfies the following conditions:
\begin{enumerate}
\item It is strongly continuous, that is, the map $U(t,s)x$ is continuous in $(t,s)$ with $a\le s\le t\le b$ for each fixed $x\in \mathbb{X}$, and
\begin{align*}
& U(t,t) =I,
\\
& U(t,s)U(s,r) =U(t,r) , \qquad r\le s \le t, \quad r,\, s,\, t \in [a,b],
\\
& \| U(t,s) \| \le Me^{\omega (t-s)}, \quad a\le s\le t \le b .
\end{align*}
\item For each $x\in D(A)$, $U(t,s)x$ is differentiable with respect to $t\ge s$, and
\begin{align}
\frac{\partial^+ U(t,s)}{\partial t} x \Big|_{t=s} &=A(s)x,\label{3.7} \\
\frac{\partial U(t,s)}{\partial s} x  \Big|_{t=s} &=-U(t,s)A(s)x. \label{3.8} 
\end{align} 
\end{enumerate}
\end{definition}

Without loss of generality, by rescaling and re-norming procedures, we will assume below that 
\[
[a,b]=[0,1]
\]
and the fixed $\mathrm{C}_0$-semigroup $\left (T_A(t)\right )_{t \geq 0}$ satisfies 
\[
\left \| T_A(t)\right \| \le 1.
\]
Since $B(t)\in \mathcal{GL}_A(\mathbb{X})$ and is continuous in $t\in [0,1]$
\[
\sup_{t\in [0,1]} \| B(t)\|_A =\omega_1 <\infty .
\]
By Theorem \ref{the gen}, for each $r\in [0,1]$ we have
\[
\left \| T_{A(\xi )}(t)\right \| \le e^{\omega_1t}, \ t\ge 0 ,
\]
where 
\[
A(\xi ):= A+B(\xi )
\quad \text{ for each }
\xi \in [0,1].
\]

We will prove the following theorem as the main result of the paper.
\begin{theorem}\label{the main}
Let $A$ generate a $\mathrm{C}_0$-semigroup satisfying \eqref{3}. Assume that $B(\cdot )$ is a family of operators in $\mathbb{X}$. 
Then, the following assertions are true:
\begin{enumerate}
\item {\bf Existence}. Under \ref{ItemA1}, the perturbed equation \eqref{2} admits an evolution family $\left (U(t,s)\right )_{t\ge s}$.
\item {\bf Uniqueness}. Under \ref{ItemA1} and \ref{ItemA2}, the above-mentioned evolution family $\left (U(t,s)\right )_{t\ge s}$ is unique.		
\end{enumerate}
\end{theorem}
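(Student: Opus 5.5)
We construct the evolution family by the Euler polygon method sketched in the introduction, following \cite[Theorem~3.1, Chapter~5]{paz}. We work on $[0,1]$ with $\|T_A(t)\|\le 1$ and $\sup_\xi\|B(\xi)\|_A=\omega_1$.

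\textbf{Step 1: approximate families.} For each $n$ take the uniform partition $t_j=j/n$ and let $\xi_n(t)=t_j$ for $t\in[t_j,t_{j+1})$. Define
\[
U_n(t,s)=T_{A(t_k)}(t-t_k)\,T_{A(t_{k-1})}(t_k-t_{k-1})\cdots T_{A(t_i)}(t_{i+1}-s)
\]
for $s\in[t_i,t_{i+1})$ and $t\in[t_k,t_{k+1})$. By Theorem~\ref{the gen} each factor has norm at most $e^{\omega_1\cdot(\text{length})}$. Hence $\|U_n(t,s)\|\le e^{\omega_1(t-s)}$ uniformly in $n$, and $U_n$ satisfies the algebraic evolution property.

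\textbf{Step 2: Cauchy property.} Compare $U_n$ with $U_m$ on a common refinement. Each is a product of semigroup pieces $T_{A(\xi)}(h)$ and $T_{A(\eta)}(h)$ over small intervals with $|\xi-\eta|\le 1/n+1/m$. By Lemma~\ref{lem per} and part~\eqref{MQ-Lem12} of the distance lemma,
\[
\|T_{A(\xi)}(h)-T_{A(\eta)}(h)\|\le h\,e^{4\omega_1 h}\,d_Y(A(\xi),A(\eta))\le h\,e^{4\omega_1 h}\,\|B(\xi)-B(\eta)\|_A .
\]
We telescope the products as in the proof of Lemma~\ref{lem key}, but keep the weights $h_j$ instead of the crude factor $N\delta$. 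The telescoping bound is $\sum_j h_j\,\varepsilon\,e^{c\omega_1}$, where $\varepsilon=\sup\{\|B(\xi)-B(\eta)\|_A:|\xi-\eta|\le 1/n+1/m\}$; this tends to $0$ by uniform continuity from \ref{ItemA1}. So $U_n(t,s)$ converges in operator norm, uniformly in $s\le t$, to some $U(t,s)$. Strong continuity, $U(t,t)=I$, the evolution property and the exponential bound then pass to the limit. For strong continuity, the approximants are jointly strongly continuous apart from jumps at the nodes that vanish as $n\to\infty$.

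\textbf{Step 3: differentiability.} For $x\in D(A)$, estimate
\[
U(s+h,s)x-T_{A(s)}(h)x=\lim_n\bigl(U_n(s+h,s)x-T_{A(s)}(h)x\bigr).
\]
By the same Lemma~\ref{lem per} bound this is at most $h\,e^{4\omega_1h}\sup_{|\xi-s|\le h}\|B(\xi)-B(s)\|_A\,\|x\|=o(h)$. Therefore $\partial_t^+U(t,s)x|_{t=s}=A(s)x$, which is \eqref{3.7}. Relation \eqref{3.8} follows from the evolution property $U(t,s-h)=U(t,s)U(s,s-h)$, the analogous left estimate, and the uniform bounds.

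\textbf{Step 4: uniqueness (the main obstacle).} Let $V$ be another evolution family with the same properties. The natural approach is to differentiate $r\mapsto U(t,r)V(r,s)x$ and show the derivative vanishes. This needs $V(r,s)x\in D(A)$ and differentiability of $U(t,r)$ in $r$ on such vectors, and neither is provided by the definition since $D(A(r))=D(A)$ need not be invariant. Instead we regularize with resolvents, which is where \ref{ItemA2} and Lemma~\ref{lem 3.2} enter. Consider
\[
\phi_\mu(r)=U(t,r)\,\mu R(\mu,A(r))\,V(r,s)x .
\]
Its $r$-derivative formally equals $U(t,r)\,\mu\,\tfrac{d}{dr}R(\mu,A(r))\,V(r,s)x$ plus commutator terms. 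These commutator terms vanish because $\mu R(\mu,A(r))$ maps into $D(A)$ and $A(r)$ commutes with its own resolvent. By Lemma~\ref{lem 3.2}, $\sup_r\|\tfrac{d}{dr}R(\mu,A(r))\|=o(1)$, and we need this to beat the factor $\mu$. That is the delicate point: a rate of $O(\mu^{-2})$ is available from the proof of Lemma~\ref{lem 3.2}, which suffices. Integrating from $s$ to $t$ and letting $\mu\to\infty$, using $\mu R(\mu,A(r))\to I$ strongly and uniformly bounded, gives $V(t,s)x=U(t,s)x$. Justifying the one-sided derivatives from \eqref{3.7}--\eqref{3.8} only at $t=s$, extended via the evolution property, is the step needing most care.
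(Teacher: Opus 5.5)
Your Steps 1--3 follow the paper's route: Euler polygons, Lemma~\ref{lem per} with a telescoping estimate, and the operator-norm bound $\|U(s+h,s)-T_{A(s)}(h)\|=o(h)$. Using weighted telescoping over a common refinement instead of dyadic partitions and Lemma~\ref{lem key} is harmless. Your uniqueness argument differs from the paper's, which applies a Phragm\'en-type inversion to $R(\lambda,A(\xi))u(\xi)$, and as written it has a genuine gap.

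In $\phi_\mu(r)=U(t,r)\,\mu R(\mu,A(r))\,V(r,s)x$ the resolvent only helps the \emph{left} factor. For the right factor you need the limit of $\frac1h\,\mu R(\mu,A(r))\bigl(V(r+h,r)-I\bigr)y$ with $y=V(r,s)x$, which need not lie in $D(A)$. The definition controls $V(r+h,r)z$ only for $z\in D(A)$, via \eqref{3.7}. Since $R(\mu,A(r))$ does not commute with $V(r+h,r)$, the smoothing cannot be moved onto $y$. The evolution property does not help either: $V(t,r+h)-V(t,r)=-V(t,r+h)\bigl(V(r+h,r)-I\bigr)$ propagates only the derivative in the \emph{second} variable. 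So the claimed cancellation of the commutator terms is unjustified.

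The repair is to swap the roles and use $\phi_\mu(r)=V(t,r)\,\mu R(\mu,A(r))\,U(r,s)x$.
\begin{itemize}
\item For the arbitrary family $V$ you then only need $\partial_r^+V(t,r)z=-V(t,r)A(r)z$ for $z\in D(A)$, which \eqref{3.7} and the evolution property do give. Because $z=\mu R(\mu,A(r))U(r,s)x$ moves with $r$, add uniform boundedness of $\frac1h(V(r+h,r)-I)$ on $D(A)$ with the graph norm, and continuity of $z(\cdot)$ in that norm.
\item For the constructed $U$, your Step~3 estimate is an operator-norm bound, and $T_{A(r)}(h)$ \emph{does} commute with $R(\mu,A(r))$. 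Hence $\frac1h R(\mu,A(r))\bigl(U(r+h,r)-I\bigr)y\to A(r)R(\mu,A(r))y$ for \emph{every} $y\in\mathbb{X}$.
\end{itemize}
With this ordering the cancellation becomes legitimate.

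Second, the rate $O(\mu^{-2})$ you take from the proof of Lemma~\ref{lem 3.2} is not available. That proof bounds $\|R(\mu,A)\|$ by $M/(\mu-\omega_0)^2$ instead of the Hille--Yosida bound $M/(\mu-\omega_0)$. The correct estimate is
\[
\Bigl\|\tfrac{d}{dr}R(\mu,A(r))\Bigr\|\le \frac{C}{\mu}\,\sup_{r}\Bigl\|\tfrac{d}{dr}[B(r)R(\mu,A)]\Bigr\|,
\]
and \ref{ItemA2} only makes the supremum bounded. So $\mu\|\frac{d}{dr}R(\mu,A(r))\|$ is a priori just $O(1)$, and your integral term is not yet shown to vanish.

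You need one more observation. Fix $\lambda$ and set $D_\lambda(r):=\frac{d}{dr}[B(r)R(\lambda,A)]$, which is continuous on $[a,b]$; then $\frac{d}{dr}[B(r)R(\mu,A)]=D_\lambda(r)(\lambda-A)R(\mu,A)$. The factor $(\lambda-A)R(\mu,A)$ is uniformly bounded and tends to $0$ strongly, and $\|B(r)R(\mu,A)\|=O(\mu^{-1})$. Together these give $\mu\frac{d}{dr}R(\mu,A(r))\to 0$ strongly, uniformly in $r$, and dominated convergence kills the integral. Alternatively, compare $\frac{d}{dr}[B(r)R(\mu,A)]$ with its average $\frac1h[B(r+h)-B(r)]R(\mu,A)$ and take $h=\mu^{-1/2}$. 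This shows that the supremum in \ref{ItemA2} actually tends to $0$, which is what your norm argument needs.
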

\begin{proof}
We proceed in two steps, corresponding to the two assertions in the statement of the theorem.

\paragraph{\textbf{Existence of evolution family.}}
We will use the idea of Euler polygon curves in the proof of the Existence and Uniqueness Theorem of ODE to construct sequences of approximated solutions to the Cauchy problem that leads to an evolution family $\left (U(t,s)\right )_{0\le s\le t\le 1}$ for Eq.~\eqref{2}. That is, if we partition the interval $[a,b]$ by a finite sequence of numbers
\[
t_0=0 <t_1<t_2 < \cdots <t_N=1,
\]
then, on each subinterval $[t_j,t_{j+1})$ we approximate $A(t):= A+B(t)$ in Eq.~\eqref{2} by the piecewise constant family $A(t_j)$ for all $t\in [t_j,t_{j+1})$. Then, if refine the partition, the obtained evolution family will be convergent.

For each $n\in \mathbb{N}$, we divide $[0,1]$ into 
\[
N:= 2^n
\]
subintervals of equal length. 
We construct an evolution family $\left (U_n(t,s)\right )_{0 \leq s \leq t \leq 1}$ based on the Euler polygon procedure by the partition
\begin{equation}\label{QM-partt2n}
t_{j}^{(n)} := \frac{j}{2^n}, \quad j=0,1,2,\cdots , N=2^n.
\end{equation}
An explicit formula for the evolution family associated with the above mentioned Euler polygon is given in the proof of \cite[Theorem 3.1, Chapter 5]{paz}. Namely, considering the partition
\eqref{QM-partt2n}
we define a two-parameter family of operators $U_n(t,s)$, $0 \le s \le t \le 1$, by
\begin{equation}
\label{Pazy-3.5}
U_n(t,s) =
\begin{cases}
T_{A\left (t_{j}^{(n)}\right )}(t-s), 
&
\text{ for } 
t_{j}^{(n)} 
\le s \le t \le 
t_{j+1}^{(n)},
\\
T_{A\left (t_{k}^{(n)}\right )} \left(t - 
t_{k}^{(n)}
\right)
&
\hspace*{-4mm}
\displaystyle
\left [
\prod_{j=l+1}^{k-1} T_{A(t_{j}^{(n)})}
\left (t_1^{(n)}\right )\right ]
T_{A\left (t_{l}^{(n)}\right )}  \left(
t_{l+1}^{(n)}
- s\right), 
\\
& \text{ for } j<k,\, t_{k}^{(n)} \le t \le t_{k+1}^{(n)},
\\
& \text{ and } t_{l}^{(n)} \le s \le t_{l+1}^{(n)}.
\end{cases}
\end{equation}
Using basic properties of $\mathrm{C}_0$-semigroups and Lemma~\ref{lem key} we can easily verify  that $U_n(t,s)$ is an evolution family (that is  associated with evolution equation the piecewise right-hand side
\[
u'(t) = A\left (t_{j}^{(n)}\right ) u(t),\qquad t\in \left [t_{j}^{(n)}, t_{j+1}^{(n)}
\right ), 
\quad 0\le j \le 2^n-1,
\]
that is, it satisfies the following properties:
\begin{align*}
& U_n(s,s) = I,
\\
& U_n(t,s) = U_n(t,r)U_n(r,s), \quad 0 \le s \le r \le t \le 1,\\
& (t,s) \mapsto U_n(t,s) \text{ is strongly continuous on } 0 \le s \le t \le 1,
\\
& \|U_n(t,s)\| \le e^{\omega_1 (t-s)}, \quad 0 \le s \le t \le 1.
\end{align*}
For each $m \ge n$, by our construction, each interval $\left [t^{(n)}_k,t^{(n)}_{k+1}\right ]=\left [\frac{k}{2^n}, \frac{k+1}{2^n}\right ]$ is divided into $2^{m-n}$ subintervals of equal length. Note that since we consider $n$ and $m$  to be sufficiently large, we may assume that
\[
t-s > \frac{1}{2^m}.
\]
The formulas for $U_m(t,s)$ is obtained by replacing each factor in \eqref{Pazy-3.5} with an appropriate expressions. For example,
\[
T_{A\left (t_{j}^{(n)}\right )}
\left(t_{1}^{(n)}\right)
\text{ replaced with }
\prod_{t_{j}^{(n)} \le t_{i}^{(m)} \le t_{j+1}^{(n)}} 
T_{A\left (t_{i}^{(m)}\right )}
\left(t_{1}^{(m)}\right).
\]
By Lemma \ref{lem per} (with $M=1$ and $\omega =\omega_1$), since
\[
\left\|  T_{A\left (t_{i}^{(m)}\right )}
\left(t_{1}^{(m)}\right) - T_{A\left (t_{i}^{(n)}\right )}
\left(t_{1}^{(m)}\right) 
\right\| 
\le 
\frac{1}{2^{m}} \Omega_m e^{ 4\omega_1 2^{-m}}.
\]
Let us denote for $t_{j}^{(n)} \le t_{i}^{(m)} \le t_{j+1}^{(n)}$
\[
a_i := T_{A\left (t_{i}^{(m)}\right )}
\left(t_{1}^{(m)}\right),
\quad
b_i :=T_{A\left (t_{j}^{(n)}\right )}
\left(t_{1}^{(m)}\right)
\]
and 
\[
N:=2^{m-n}, 
\quad 
\delta :=\frac{1}{2^m} \Omega_m e^{ 4\omega_1 2^{-m}},
\quad 
K:=e^{2^{-m}\omega_1}.
\]
Then, 
\begin{align*}
&
\prod_{i=1}^Na_i = \prod_{t_{j}^{(n)} \le t_{i}^{(m)} \le t_{j+1}^{(n)}} 
T_{A\left (t_{i}^{(m)}\right )}
\left(t_{1}^{(m)}\right),
\\
&
\prod_{i=1}^Nb_i = \prod_{t_{j}^{(n)} \le t_{i}^{(m)} \le t_{j+1}^{(n)}}  T_{A\left (t_{j}^{(n)}\right )}
\left(t_{1}^{(m)}\right).
\end{align*}
By Lemma \ref{lem key} 
we have
\begin{align*}
&
\left\| 
T_{A\left (t_{j}^{(n)}\right )}
\left(t_{1}^{(n)}\right) -	\prod_{t_{j}^{(n)} \le t_{i}^{(m)} \le t_{j+1}^{(n)}} 
T_{A\left (t_{i}^{(m)}\right )}
\left(t_{1}^{(m)}\right)
\right\| 
\\	
&\le 
2^{m-n}
\left[ \frac{1}{2^m} \Omega_n e^{2^{{-m}} \times 4\omega_1} \right] e^{2^{m-n}\times 2^{-m}4\omega_1 } \\
&\le  
\frac{1}{2^n}  \Omega_n e^{8\omega_1(2^{-n})} .
\end{align*}
Applying Lemma \ref{lem key} again to the following sequences with 
\[
\delta = \frac{1}{2^n}  \Omega_n e^{8\omega_1(2^{-n})} ,
\quad 
K=e^{ 2^{-n}4\omega_1 },
\quad 
N=\left [(t-s)2^n\right ],
\]
where $[\alpha]$ denotes the integral part of $\alpha$, corresponding to the partition of $[s, t]$ into intervals of length $\frac{1}{2^n}$,
gives
\begin{equation}\label{Cauchy}
\left \| U_n(t,s) -	U_m(t,s) \right \| 
\le 
(t-s) e^{4\omega_1} \Omega_n.
\end{equation}
Since the function $B(\cdot ):[0,1]\to \mathcal{GL}_{A}(\mathbb{X})$ is uniformly continuous, the oscillation \[
\Omega_n\to 0
\quad \text{ as }
n\to \infty,
\]
so the sequence $\left (U_n(t,s)\right )_{0 \leq s \leq t \leq 1}$ is a Cauchy sequence that converges to $\left (U(t,s)\right )_{0 \leq s \leq t \leq 1}$. 
We now prove that $\left (U(t,s)\right )_{0 \leq s \leq t \leq 1}$ satisfies all properties we need.

First, $\left (U(t,s)\right )_{0 \leq s \leq t \leq 1}$ is an evolution family as this property follows from the similar property of $\left (U_n(t,s)\right )_{0 \leq s \leq t \leq 1}$.

Next, we prove \eqref{3.7}, that is, for each $x\in D(A)$, $U(t,s)x \in D(A)$ and
\begin{equation}\label{3.24}
\left .
\frac{\partial^+ U(t,s)}{\partial t} x \right |_{t=s} =A(s)x.
\end{equation}
To this end, we prove the following claim
\begin{claim}
For each $r\in [0,1]$, $a\le r\le t \le b$, we have
\begin{equation}\label{UTAr}
\sup_{0\le r \le t\le 1} \frac{1}{t-r} 	\left \| U(t,r)x-T_{A(r)}(t-r)x \right \|  =  \varepsilon (|t-r|) ,
\end{equation}
where $\varepsilon (\delta)$ denote a function approaching $0$ as $\delta \to 0$.
\end{claim}
\begin{proof}
If we repeat the above mentioned argument when proving \eqref{Cauchy},
then,
\[
\left \| 	U_n(t,r) -	U_m(t,r) \right \| 
\le (t-r) e^{4\omega_1} \Omega_n .
\]
In the construction of $U_n(t,r)$ we assume $n$ is a large number when $t,\, r$ are fixed.
Now letting $m\to \infty$ gives
\begin{equation}\label{3.27}
\left \| 	U_n(t,r) -	U(t,r) \right \| 
\le |t-r| e^{4\omega_1} \Omega_n .
\end{equation}	
By Lemmas \ref{lem key} and \ref{lem per} we have
\begin{align}\label{3.28}
\left \| U_n(t,r)x-T_{A(r)}(t-r)x \right \|  &\le |t-r| e^{4\omega_0|t-r|} \Omega_{[r,t]} ,
\end{align}
where 
\[
\Omega_{[r,t]}=\sup_{[\xi \in r,t]} \| B(r)-B(\xi)\|_A.
\]
Combining \eqref{3.27} and \eqref{3.28} gives
\begin{align*}
& 
\sup_{0\le r \le t\le 1} \frac{1}{t-r} 	\left \| U(t,r)x-T_{A(r)}(t-r)x \right \|  
\\
&\le 
\sup_{0\le r \le t\le 1} \frac{1}{t-r}  	\left \| U_n(t,r)x-T_{A(r)}(t-r)x \right \| 
+ 
\sup_{0\le r \le t\le 1} \frac{1}{t-r}  \left \| 	U_n(t,r)x -	U(t,r)x \right \| 
\\
&\le  e^{4\omega_0} \left (\Omega_{[r,t]}+\Omega_n\right ) .
\end{align*}
Note that in these formulas 
\[
|t-r| \ge \frac{1}{2^n},
\]
so if $|t-r|\to 0$ as $n\to \infty$,
and by the uniform continuity of $B(\cdot)$ in $\mathcal{GL}_A(\mathbb{X})$, the quantities 
\[
\Omega_n,\, \Omega_{[r,t]}  \to 0.
\]
This proves the claim.
\end{proof}

Now we are ready to prove \eqref{3.24}.
We have, for $h>0$,
\begin{align*}
& \left\| \frac{1}{h}\left( U(s+h,s)x-x \right) -A(s)x\right\| 
\\
&= 	\left\| \frac{1}{h}\left( U(s+h,s)x-T_{A(s)}(h)x \right) -  \frac{1}{h}\left( T_{A(s)}(h)x-x)-A(s)x \right) \right\| \\
&\le 	\left\| \frac{1}{h}\left( U(t+h,t)x-T_{A(s)}(h)x \right) \right\|  +
\left\|   \frac{1}{h}\left( T_{A(s)}(h)x-x)-A(s)x \right) \right\|.
\end{align*}
Since $x\in D(A)$ and by \eqref{UTAr}
\begin{align*}
\left\| \frac{1}{h}\left( U(s+h,s)x-T_{A(s)}(h)x \right) \right\|  & = \varepsilon_1 (h),
\\
\left\|   \frac{1}{h}\left( T_{A(s)}(h)x-x)-A(t)x \right) \right\|  &= \varepsilon_2 (h),
\end{align*}
where $\varepsilon_1,\,\varepsilon_2\to 0$ as $h\to 0$.
This yields that
\[
\left .
\frac{\partial^+ U(t,s)}{\partial t} x\right |_{t=s} =A(s)x.
\]
Similarly, we can prove \eqref{3.8}.

\paragraph{\textbf{Uniqueness.}}
Suppose that $(V(t,s))_{t\ge s}$ is another evolution family that satisfies all properties in the theorem. For each pair $0\le s\le t \le 1$, for $s\le \xi \le t$ and $x\in D(A)$ set 
\[
u(\xi )  := U(t,\xi)x- V(t,\xi)x.
\]
We are going to show that 
\[
u(\xi )=0,
\quad \text{ for all }
\xi \in [0,1],
\]
or equivalently, for all $\xi\in[0,1]$, 
\[
R(\lambda, A(\xi))u(\xi)=0.
\]
By our supposition, since $u'(t)=A(t)u(t)$,
\begin{align*}
\frac{d}{dt} \left( R(\lambda ,A(t))u(t) \right)  &= \frac{d}{dt} \left( R(\lambda ,A(t)) \right)  u(t) + R(\lambda ,A(t))u'(t)\\
&=  \frac{d}{dt} \left( R(\lambda ,A(t)) \right)  u(t) + R(\lambda ,A(t))A(t)u(t) \\
&=  \frac{d}{dt} \left( R(\lambda ,A(t)) \right)  u(t)  +\left[\lambda R(\lambda,A(t))-I\right]u(t) \\
&= \left[ \lambda R(\lambda,A(t)) +  \frac{d}{dt} \left( R(\lambda ,A(t)) \right)  -I\right] u(t).
\end{align*}
Set
\begin{align*}
v(t)&= R(\lambda ,A(t))u(t), \\
f(t)&=\left[ \frac{d}{dt} \left( R(\lambda ,A(t)) \right)  -I\right] u(t) .
\end{align*}
Then, $v(t)$ is a classical solution of the equation
\[
v'(t) =\lambda  v(t)+f(t).
\]
Therefore, by the Variation-of-Constants Formula,
\[
R(\lambda ,A(t))u(t)=\int_0^t e^{\lambda (t-s)} \left[ \frac{d}{ds} \left( R(\lambda ,A(s)) \right)  -I\right] u(s)ds .
\]
By \eqref{a1}  since for all $t\in [0,1]$
\[
\| R(\lambda ,A(t)) \| \le \frac{C}{\lambda -\omega_0},
\]
where $C$ is some constant, we have
\begin{equation}\label{3.49}
\lim_{\lambda \to \infty} \int_0^t e^{\lambda (t-s)} \left[ \frac{d}{ds} \left( R(\lambda ,A(s)) \right)  -I\right] u(s)ds =0.
\end{equation}

Next, we  will  prove $u(s)=0$ for all $s\in [0,1]$. In fact,  for any fixed $x^*\in \mathbb{X}^*$ set
\[
w_\lambda (s):=
\left \langle
x^*, \left[ \frac{d}{ds} \left( R(\lambda ,A(s)) \right)  -I\right] u(s)\right \rangle.
\]
Then, by \eqref{3.49} and Lemma \ref{lem 3.2}
\begin{align}
\sup_{n\in\mathbb{N}} \left| 	\int_0^t e^{n (t-s)} w_n(s)ds \right| &=: M_1 <\infty,
\label{3.50}
\\
\sup_{n\in\mathbb{N}} 	\sup_{s\in [0,1]} | w_n(s) | &=: M_2 < \infty.
\notag
\end{align}
Below we will use the idea in the proof of \cite[Lemma~1.1, p. 100]{paz} to exploit this estimate \eqref{3.50} to prove $u(t)=0$ for any $t\in [0,1]$.
Consider the series
\begin{equation}\label{ser}
\sum_{k=1}^\infty \frac{(-1)^{k-1}}{k!} e^{kn\tau} =1-e^{-e^{n\tau}}.
\end{equation}
Notice that this series is absolutely convergent on each bounded interval of $\tau$, so for each $0\le t <1$ since
\begin{align*}
\left| \int^1_0 \sum_{k=1}^\infty \frac{(-1)^{k-1}}{k!} e^{kn(t-1+s)} w_n(s)ds \right| 
& \le \sum_{k=1}^\infty \frac{1}{k!} e^{kn(t-1)} \left| \int^1_0 e^{kns}w_n(s) ds \right| \\
& \le M_1 \left (e^{e^{n(t-1)}}-1\right ),
\end{align*} 
and
\[
\lim_{n\to\infty} M_1 \left (e^{e^{n(t-1)}}-1\right )=0,
\]
we have
\begin{align*}
\lim_{n\to \infty}
\int^1_0 \left (1-e^{-e^{n(t-1+s)}}\right ) w_n(s)ds 
&=
\lim_{n\to\infty} 	\int^1_0 \sum_{k=1}^\infty \frac{(-1)^{k-1}}{k!} e^{kn(t-1+s)} w_n(s)ds 
\\
&=
0.
\end{align*}
Notice that for each $0\le t< 1$ by \eqref{ser} the sequence of functions
\[
f_n(s) := \left (1-e^{-e^{n(t-1+s)}}\right ) w_n(s)
\]
has a limit $f(s)$ as $n\to \infty$, where, by Lemma~\ref{lem 3.2},
\begin{align*}
f(s)
& :=
\lim_{n\to\infty} f_n(s)
\\
&= 
\lim_{n\to \infty} \left (1-e^{-e^{n(t-1+s)}}\right ) w_n(s)\\
&=\lim_{n\to \infty} \left (1-e^{-e^{n(t-1+s)}}\right )\left \langle x^*,  \left[ \frac{d}{ds} \left( R(n ,A(s)) \right)  -I\right] u(s)\right  \rangle\\
&= \begin{cases}
- \left \langle x^*,u(s)\right \rangle & \text{ if } s\in [1-t,1],
\\
0  & \text{ if } s \in [0,1-t] .
\end{cases}
\end{align*}
Since 
\[
\int_0^1 f_n(s) ds = 0,
\]
and, the Lesbesgue Dominated Convergence Theorem yields that, for each $0\le t< 1$
\[
0
=
\int^1_{0} f_n(s)ds 
= 
- \int^1_{1-t} \left \langle x^*,u(s) \right \rangle ds.
\]
This yields that 
\[
\left \langle  x^*,u(t) \right \rangle =0
\quad \text{ for each }
t\in [0,1].
\]
Since $x^*\in \mathbb{X}^*$ is an arbitrary functional, this shows that 
\[
u(t)=0 \quad \text{ for each } t\in [0,1].
\]
This shows the uniqueness of the evolution family. And the theorem is proved.
\end{proof}

\section{Roughness of exponential stability and dichotomy under perturbation}
In this section, we study the roughness of exponential dichotomy (and exponential stability) of an autonomous evolution equation (or $\mathrm{C}_0$-semigroup) under small perturbations $B(\cdot)$. 
Recall the following notion of exponential dichotomy.
\begin{definition}
An evolution family $\left (U(t,s)\right )_{t \geq s}$ is said to have an \textit{exponential dichotomy} if there exist projections $P(t): \mathbb{X} \to \mathbb{X}$ and constants $M,\,\alpha > 0$ such that:
\begin{enumerate}
\item The map $t \mapsto P(t)x$ is continuous for every $x \in \mathbb{X}$,
\item $P(t)U(t,s) = U(t,s)P(s)$,
\item $\|U(t,s)x\| \le Me^{-\alpha(t-s)}\|x\|$, 
$x \in \operatorname{Im}P(s)$,
\item $\|U(t,s)y\| \ge M^{-1}e^{\alpha(t-s)}\|y\|$,
$y \in \ker P(s)$,
\item $U(t,s)\big|_{\ker P(s)}$ is an isomorphism onto $\ker P(t)$.
\end{enumerate}

The \textit{evolutionary semigroup} $\left (T_U^h\right )_{h \ge 0}$ on $\mathrm{C}_0(\mathbb{R}, \mathbb{X})$ is defined by
\[
\left [T_U^h v\right ](t) = U(t,t-h)v(t-h).
\]
\end{definition}
It is well known (see, e.g., \cite{latmon, min, rau}) that $\left (U(t,s)\right )_{t \geq s}$ has an exponential dichotomy if and only if $T_U^1$ is hyperbolic, that is
\[
\sigma \left (T_U^1\right ) \cap \{ z\in \mathbb{C} : |z|=1\} =\emptyset .
\]
%%%%%%%%%%%
As a consequence of this we have the following:
\begin{theorem}\label{the exo dic}
Assume that $\left (U(t,s)\right )_{t \geq s}$ has an exponential dichotomy and $\left (V(t,s)\right )_{t \geq s}$ is another evolution family in $\mathbb{X}$ such that $\left \| T^1_U-T^1_V\right \|$ is sufficiently small. Then $\left (V(t,s)\right )_{t \geq s}$ has an exponential dichotomy as well.
\end{theorem}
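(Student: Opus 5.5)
The proof combines the spectral characterization of exponential dichotomy recalled just before the statement with upper semicontinuity of the spectrum under small bounded perturbations. Since $T^1_U$ and $T^1_V$ are bounded operators on $\mathrm{C}_0(\mathbb{R},\mathbb{X})$, the hyperbolicity of $T^1_V$ should follow from that of $T^1_U$ by a Neumann series argument on the unit circle.

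First, because $\left(U(t,s)\right)_{t\ge s}$ has an exponential dichotomy, the characterization (see \cite{latmon, min, rau}) gives
\[
\sigma\left(T^1_U\right)\cap\{z\in\mathbb{C}: |z|=1\}=\emptyset.
\]
Hence every $z$ with $|z|=1$ lies in $\rho\left(T^1_U\right)$. The map $z\mapsto \left\|R\left(z,T^1_U\right)\right\|$ is continuous on the resolvent set, and the unit circle is compact. Therefore
\[
\kappa:=\sup_{|z|=1}\left\|R\left(z,T^1_U\right)\right\|<\infty .
\]

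Next, "sufficiently small" is made precise as $\left\|T^1_U-T^1_V\right\|<1/\kappa$. For $|z|=1$ we write
\[
z-T^1_V=\left(z-T^1_U\right)\left[I-R\left(z,T^1_U\right)\left(T^1_V-T^1_U\right)\right].
\]
The bracket has the form $I-S$ with $\|S\|\le\kappa\left\|T^1_V-T^1_U\right\|<1$, so it is invertible by the Neumann series. Consequently $z\in\rho\left(T^1_V\right)$, with
\[
\left\|R\left(z,T^1_V\right)\right\|\le \frac{\kappa}{1-\kappa\left\|T^1_V-T^1_U\right\|}.
\]
This shows that $\sigma\left(T^1_V\right)$ does not meet the unit circle, that is, $T^1_V$ is hyperbolic.

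Finally, we apply the converse direction of the characterization to conclude that $\left(V(t,s)\right)_{t\ge s}$ has an exponential dichotomy. The only point needing care is that this converse applies, namely that $V$ is an exponentially bounded, strongly continuous evolution family. Only then is $T^h_V$ a well-defined semigroup on $\mathrm{C}_0(\mathbb{R},\mathbb{X})$ to which the cited theorem applies. This standing hypothesis is implicit in calling $V$ an evolution family, so I would check it there.

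No real obstacle is expected. The uniform resolvent bound on the circle, which comes from compactness, is the one step that must be stated explicitly, because it is what turns "sufficiently small" into a quantitative threshold.
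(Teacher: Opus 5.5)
Your proposal is correct and follows the route the paper intends. The paper gives no written proof beyond calling the theorem a consequence of the characterization of exponential dichotomy by hyperbolicity of $T^1_U$, so your argument supplies the step it leaves implicit: the compactness bound $\kappa=\sup_{|z|=1}\|R(z,T^1_U)\|<\infty$, the Neumann-series factorization giving the explicit threshold $\|T^1_U-T^1_V\|<1/\kappa$, and the caveat that the converse direction requires $V$ to be a strongly continuous, exponentially bounded evolution family on $\mathbb{R}$.
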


Applying Theorem \ref{the main} repeatedly, we can prove the following for the existence of an evolution family on the whole real line.
\begin{theorem}\label{the R}
Assume that $B(\cdot )$ satisfies \ref{ItemA1} and \ref{ItemA2} for each interval $[a,b]\subset \mathbb{R}$. Then, Eq.~\eqref{1.2} admits an evolution family on the whole real line.
\end{theorem}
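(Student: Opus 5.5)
The plan is to build the global evolution family by patching together the local families from Theorem~\ref{the main} on the intervals $[-k,k]$, $k\in\mathbb{N}$, and to use the uniqueness part of that theorem to make the patches compatible. For each $k$, the hypotheses \ref{ItemA1} and \ref{ItemA2} hold on $[-k,k]$. Theorem~\ref{the main} (applied without the normalization $[a,b]=[0,1]$, which was only a rescaling) then gives a unique evolution family $\left(U_k(t,s)\right)_{-k\le s\le t\le k}$ for Eq.~\eqref{1.2}. Its bound is $\|U_k(t,s)\|\le Me^{\omega_k (t-s)}$, where $\omega_k=\omega_0+M^2\sup_{[-k,k]}\|B(\xi)\|_A$, as furnished by Theorem~\ref{the gen} through the Euler polygon construction.

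\textbf{Consistency step.} Fix $k<l$. The restriction of $U_l$ to $-k\le s\le t\le k$ has all the defining properties of an evolution family for Eq.~\eqref{1.2} on $[-k,k]$: strong continuity, the algebraic identities $U(t,t)=I$ and $U(t,s)U(s,r)=U(t,r)$, an exponential bound, and the differentiability relations \eqref{3.7}--\eqref{3.8}. All of these are local in $(t,s)$, so they survive restriction. By the uniqueness assertion of Theorem~\ref{the main} on $[-k,k]$, we get $U_l(t,s)=U_k(t,s)$ there.

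\textbf{Construction and verification.} We may therefore define $U(t,s):=U_k(t,s)$ for any $k$ with $[s,t]\subset[-k,k]$; by the consistency step this is well defined. Strong continuity, $U(t,t)=I$, and the derivative identities \eqref{3.7}--\eqref{3.8} are local and are inherited directly from $U_k$. For the cocycle property $U(t,s)U(s,r)=U(t,r)$ with $r\le s\le t$, choose $k$ so large that $[r,t]\subset[-k,k]$ and use the property of $U_k$.

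The growth bound is the one point needing care. On the whole line one can only get $\|U(t,s)\|\le Me^{\omega(t-s)}$ uniformly if $\sup_{\mathbb{R}}\|B(\xi)\|_A<\infty$. Otherwise I would interpret the evolution family on $\mathbb{R}$ as having a locally uniform exponential bound, namely the bound $Me^{\omega_k(t-s)}$ on each $[-k,k]$.

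\textbf{Main obstacle.} The main difficulty is justifying that uniqueness in Theorem~\ref{the main} applies to the restricted family $U_l|_{[-k,k]}$. The uniqueness proof works for any family satisfying the listed properties, and those are all local, so I expect this to go through. Still, it should be checked that the constants involved, such as the growth exponent and $M$, may differ between the two families without affecting the uniqueness argument. That argument only uses the differential relation and bounds on $R(\lambda,A(t))$, which depend on $B$ and not on the family, so the differing constants should be harmless.
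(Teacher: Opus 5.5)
Your proposal is correct and matches the paper's argument, which is only the remark that Theorem~\ref{the main} is applied repeatedly on bounded intervals and the resulting local evolution families are patched together. Using the uniqueness part on $[-k,k]$ is a natural way to get compatibility; concatenating the families on consecutive intervals $[n,n+1]$ would need only the existence part. Your caveat about the growth bound goes beyond the paper and is correct: without $\sup_{t\in\mathbb{R}}\|B(t)\|_A<\infty$ no uniform bound $Me^{\omega(t-s)}$ on all of $\mathbb{R}$ can hold in general. For example, $B(t)=tI$ gives $U(t,s)=e^{(t^2-s^2)/2}T_A(t-s)$, so the locally uniform bound is the right reading of the statement.
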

The main result of this section is the following
\begin{theorem}\label{RoughnessEDtheR}
Assume that the assumptions of Theorem~\ref{the R} hold and that the semigroup $\left (T_A(t)\right )_{t \geq 0}$ possesses an exponential dichotomy.
Then, the evolution family generated by Eq.~\eqref{1.2} has an exponential dichotomy as well provided that the perturbation $B(\cdot )$ is uniformly continuous in $\mathcal{GL}_A(\mathbb{X})$, and the bound
\[
\omega_1 =\sup_{t\in \mathbb R} \| B(t)\|_A 
\]
is sufficiently small.
\end{theorem}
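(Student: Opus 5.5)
The idea is to treat Eq.~\eqref{1.2} on $\mathbb{R}$ as a small perturbation of the autonomous equation $u'=Au$. We then read off the dichotomy through the evolutionary semigroup and Theorem~\ref{the exo dic}.

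\textbf{Setup.} Let $U(t,s)=T_A(t-s)$. It has an exponential dichotomy because $T_A$ does. Let $V(t,s)$ be the evolution family on $\mathbb{R}$ given by Theorem~\ref{the R}. By the characterization via evolutionary semigroups, $T^1_U$ is hyperbolic. By Theorem~\ref{the exo dic}, it then suffices to show that $\|T^1_U-T^1_V\|$ is small once $\omega_1$ is small.

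\textbf{Reduction to a uniform estimate.} Since $[T^1_Uv](t)-[T^1_Vv](t)=(T_A(1)-V(t,t-1))v(t-1)$, we have
\[
\left\|T^1_U-T^1_V\right\|\le \sup_{t\in\mathbb{R}}\left\|T_A(1)-V(t,t-1)\right\| .
\]
So the whole task is a uniform-in-$t$ bound on the unit-time difference. I would obtain it by returning to the Euler polygon construction in the proof of Theorem~\ref{the main}, applied on each interval $[t-1,t]$. The approximants $V_n(t,t-1)$ are ordered products of $2^n$ factors of the form $T_{A+B(\xi_j)}(2^{-n})$.

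For each factor, Lemma~\ref{lem per}, together with the bound $d_Y(A,A+B(\xi))\le\|B(\xi)\|_A\le\omega_1$ from the earlier lemma, gives
\[
\left\|T_{A+B(\xi_j)}(2^{-n})-T_A(2^{-n})\right\|\le 2^{-n}M^2e^{4\omega 2^{-n}}\omega_1 .
\]
Here $\omega=\omega_0+M^2\omega_1$, which is admissible by Theorem~\ref{the gen}. Lemma~\ref{lem key} then bounds the difference of the two products by $C\,\omega_1$, with $C$ depending only on $M,\omega_0$ and an upper bound for $\omega_1$, not on $n$ or $t$. Letting $n\to\infty$ gives $\sup_t\|V(t,t-1)-T_A(1)\|\le C\omega_1$.

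\textbf{Main obstacle.} The delicate point is controlling the growth constant $K$ in Lemma~\ref{lem key} when $M>1$. Naively, $K^{N-1}$ with $K=Me^{\omega 2^{-n}}$ blows up as $N=2^n\to\infty$. To avoid this, I would first renorm $\mathbb{X}$ with the equivalent norm $\|x\|_*=\sup_{s\ge0}\|e^{-\omega_0 s}T_A(s)x\|$, in which $T_A$ is quasi-contractive with $M=1$. This matches the paper's normalization before Theorem~\ref{the main}. In that norm, Theorem~\ref{the gen} yields per-step bounds $e^{\omega 2^{-n}}$, so $K^{N}\le e^{\omega}$ stays bounded. Renorming changes $\|\cdot\|_A$ and the dichotomy constants only by factors depending on $M$, and hyperbolicity of $T^1_U$ is norm-independent, so the smallness threshold transfers back.

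Uniform continuity of $B(\cdot)$ is used to ensure that the Euler approximants converge uniformly in $t\in\mathbb{R}$, through the oscillation $\Omega_n$ of the proof of Theorem~\ref{the main}. This makes $V$ strongly continuous and bounded on unit intervals, so that $T^1_V$ is well defined on $\mathrm{C}_0(\mathbb{R},\mathbb{X})$. With that in place, Theorem~\ref{the exo dic} concludes.
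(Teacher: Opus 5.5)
Your proposal is correct and follows essentially the paper's proof. The paper also reduces, via Theorem~\ref{the exo dic}, to a uniform bound on $\sup_{t}\|V(t,t-1)-T_A(1)\|$, obtains it from the Euler approximants by combining Lemma~\ref{lem per} for each factor with Lemma~\ref{lem key} for the product, and then lets $n\to\infty$. Your explicit renorming to $M=1$ while keeping $\omega_0$ in the growth exponent is a worthwhile refinement: the paper's bound $e^{4\omega_1}\omega_1$ relies on its earlier normalization $\|T_A(t)\|\le 1$, which cannot hold for a semigroup whose dichotomy has a nontrivial unstable part.
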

\begin{proof}
By the characterization of the exponential dichotomy, it suffices to show that if $\omega_1$ is sufficiently small, then, so is the quantity
\[
\sup_{t\in \mathbb{R}} \left \| U(t,t-1)-T_A(1)\right \| .
\]
By Lemmas \ref{lem key} and \ref{lem per}, for each $n$, using the partition constructed in Theorem \ref{the main}, for each $t\in \mathbb R$, we have
\[
\| U_n(t,t-1) -T_A(1) \| \le  e^{4\omega_1}\omega_1.
\]
Therefore, letting $n\to \infty$ gives
\[
\left \| U(t,t-1) -T_A(1) \right \| \le  e^{4\omega_1}\omega_1.
\]
This, together with Theorem~\ref{the exo dic}, implies that for sufficiently small  
\[
\omega_1 =\sup_{t\in \mathbb R}\| B(t)\|_A
\]
the perturbed evolution equation \eqref{1.2} admits an evolution family that has an exponential dichotomy as well.
\end{proof}

%%%%%%%%%%% Complete %%%%%%%%%

\section{Examples}\label{Sect-Exa}

In this final section, we present several examples that illustrate the applicability of our main results.

\begin{example}
Consider the following semigroup $\left (T(t)\right )_{t\ge 0}$ on $\mathrm{L}^1(\mathbb{R}^+)$ defined as
\begin{equation*}
[T(t)f](s) :=\chi _+ (s-t) f(s-t)=
\begin{cases}
f(s-t), & \text{ if } s\ge t,
\\
0, & \text{ if } 0\le s <t,
\end{cases}
\end{equation*}
where $\chi_{+}(\cdot)$ is the characteristic function of $[0,\infty)$. It can be shown that $\left (T(t)\right )_{t\ge 0}$ is strongly continuous (see, e.g., \cite{rab} and its references). Let us denote by $G$ the generator of $(T(t))_{t\ge 0}$. Consider the evolution equation
\[
u'(t)=Gu(t),\quad t\ge 0,
\]
where $G=-\frac{d}{dt}$, $u(t)\in \mathrm{L}^1(\mathbb{R}^+)$. As is well known, the "general solution" of this equation is the translation semigroup $\left (T(t)\right )_{t\ge 0}$ in $\mathrm{L}^1(\mathbb{R})$. This is a $\mathrm{C}_0$-semigroup with the generator $G:=- \frac{d}{dt}$, with domain $D(G)$ defined as
\[
D(G) =\{ v(\cdot ) : v(\cdot )\in \mathrm{L}^1(\mathbb{R}^+) \text{ absolutely continuous and } v'(\cdot) \in \mathrm{L}^1(\mathbb{R}^+)\} =\mathrm{W}^{1,1}(\mathbb{R}^+).
\]

We will find functions $b\in \mathrm{L}^{1}(\mathbb{R}^+)\setminus \mathrm{L}^2(\mathbb R^+)$. This means, the operator $B$ in $\mathrm{L}^1(\mathbb{R}^+)$ defined as 
\[
[Bx](t)= b(t)x(t), \qquad t\in \mathbb{R},\quad x\in \mathrm{L}^1(\mathbb{R}^+),
\]
may not be a bounded operator. 
Also, $B$ will be found to be closed.
Since $B$ is closed (see \cite[Example]{miy}, the integral
\[
\int^\infty _0 e^{-\mu s} b(\xi )\chi_+(s-\xi)x(s-\xi)ds
\]
is convergent and 
\[
R(\mu,G) x(\xi) = \int^\infty _0 e^{-\mu s} \chi_+(s-\xi)x(s-\xi)ds
\]
we have
\begin{align*}
BR(\mu,G) x(\xi)  &=\int^\infty_0 e^{-\mu s} [BT(s)x](\xi)ds
\\	
&=\int^\infty _0 e^{-\mu s} b(\xi )\chi_+(s-\xi)x(s-\xi)d s.
\end{align*}
Therefore,
\begin{align*}
\|	BR(\mu,G) x(\xi) \|_1 	&=\int^\infty_0 \left|\int^\infty _0 e^{-\mu s} b(\xi )\chi_+(s-\xi) x(s-\xi)d s\right| d\xi 
\\
&\le  \int^\infty _0 e^{-\mu s}   \int^\infty_{0}  \left| b(\xi )\chi_+(s-\xi)x(s-\xi)d \xi\right| ds
\\
&=  \int^\infty _0 e^{-\mu s}   \int^s_{0}  \left| b(\xi )x(s-\xi)d \xi\right| ds
\\
&\le   \int^\infty _0 e^{-\mu s} \left( |b|*|x|\right) (s) ds
\\
&\le |B(\mu)| |X(\mu)|,
\end{align*}
where $B(\mu)$ and $X(\mu)$ are the Laplace transforms of $b$ and $|x|$, respectively.
As is well known, 
\[
|X(\mu)| \le \| x\|_1
\quad \text{ for } \mu >0.
\]
Therefore, for 
\begin{equation}\label{5-7}
\|	BR(\mu,G) \|  \le \frac{K}{\mu-\omega_0}, \quad \mu >\omega ,
\end{equation}
where $K$ and $\omega_0$ are certain fixed constants, independent of $\mu$, we just need to choose $b$ such that  
\begin{equation}\label{5-10}
\limsup_{\mu \to \infty} \left |\mu^\gamma B(\mu)\right | <\infty ,
\end{equation}
where $\gamma \ge 1$ is any fixed number.

According to \cite[Theorem 1, p.~181]{wid}, with the normalized 
\[
\alpha (t) :=\int^t_0 b(s)ds,
\]
for \eqref{5-10} to be valid we can choose $b$ such that
\[
\limsup_{t\to 0^+} \left| \frac{\alpha (t)}{t}  \right| =	
\limsup_{t\to 0^+} \left| \frac{1}{t} \int^t_0 b(s)ds \right| 
<
\infty.
\]
A simple candidate for such a function $b$ is the following
\begin{equation}\label{5-9}
b(t):=\sum_{n=1}^\infty n^2 \chi_{[n,n+n^{-4}]}(t),
\end{equation}
where $\chi_{[n,n +n^{-4}]}(\cdot)$ is the characteristic function of the interval $[n,n+n^{-4}]$. 
\begin{claim}
For the function $b$ defined in \eqref{5-9}, the following assertions are true:
\begin{enumerate}
\item $b\in \mathrm{L}^1([0,\infty))$, $D(B) \supset D(G)$;
\item \eqref{5-10} is valid;
\item $b$ is an unbounded operator;
\item $b$ is a closed operator.
\end{enumerate}
\end{claim}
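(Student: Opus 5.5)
The plan is to verify the four assertions directly from the explicit form \eqref{5-9}, using only elementary measure theory. In particular, for the second assertion I would not pass through \cite[Theorem 1, p.~181]{wid} but estimate the Laplace transform of $b$ by hand. The key structural fact is that $b$ vanishes on $[0,1)$ and is a nonnegative step function whose integral is summable.

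For the first assertion, I compute $\|b\|_1=\sum_{n\ge1} n^2\cdot n^{-4}=\sum_{n\ge1} n^{-2}=\pi^2/6<\infty$; the intervals $[n,n+n^{-4}]$ are pairwise disjoint, so no overlap issue arises. For the inclusion $D(B)\supset D(G)$, I use that every $v\in \mathrm{W}^{1,1}(\mathbb{R}^+)$ is absolutely continuous with $v(\infty)=0$. Hence $|v(\xi)|\le \int_\xi^\infty |v'(s)|\,ds\le \|v'\|_1$, so $v\in \mathrm{L}^\infty$. Then $\|bv\|_1\le \|b\|_1\|v\|_\infty<\infty$, which gives $v\in D(B)$, where $D(B):=\{x\in \mathrm{L}^1(\mathbb{R}^+): bx\in \mathrm{L}^1(\mathbb{R}^+)\}$ is the maximal domain.

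For the second assertion, since $b=0$ on $[0,1)$ and $b\ge 0$, the Laplace transform satisfies
\[
0\le B(\mu)=\int_1^\infty e^{-\mu s}b(s)\,ds\le e^{-\mu}\|b\|_1 .
\]
Therefore $\mu^\gamma B(\mu)\le \mu^\gamma e^{-\mu}\|b\|_1\to 0$ as $\mu\to\infty$, for every $\gamma\ge1$, and \eqref{5-10} follows. This is consistent with the Widder criterion, since $\alpha(t)=0$ for $0\le t<1$.

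For the third assertion, I show that $B$ is not bounded on $\mathrm{L}^1(\mathbb{R}^+)$ by testing against $x_n:=n^4\chi_{[n,n+n^{-4}]}$. These satisfy $\|x_n\|_1=1$ and $x_n\in D(B)$, while $\|Bx_n\|_1=n^2\to\infty$. Equivalently, $b\notin \mathrm{L}^\infty$, since $b=n^2$ on a set of positive measure for every $n$.

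For the fourth assertion, I take $x_k\in D(B)$ with $x_k\to x$ and $bx_k\to y$ in $\mathrm{L}^1$. Passing to a subsequence, both convergences hold almost everywhere, so $bx=y$ a.e. Hence $bx\in \mathrm{L}^1$, so $x\in D(B)$ and $Bx=y$, which shows that $B$ is closed.

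The only delicate point is interpretive rather than technical. The paper defines $B$ both on $D(G)$ and as a multiplication operator on $\mathrm{L}^1$, and closedness holds for the maximal domain but not for the restriction to $\mathrm{W}^{1,1}$. I would therefore state explicitly that $B$ is taken with its maximal domain, which contains $D(G)$ by the first assertion.
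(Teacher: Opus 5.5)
Your proposal is correct, and on items (2)--(4) it is more self-contained than the paper's proof.

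Item (1) matches the paper, except that your use of $v(\infty)=0$ is what actually justifies $|v|\le\|v'\|_1$. The paper derives this from $f(t)=f(0)+\int_0^t f'(s)\,ds$, which only gives $|f(0)|+\|f'\|_1$; this is harmless, since $f\in\mathrm{L}^\infty$ either way.

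For (2), the paper appeals to Widder's Abelian theorem with $\alpha(t)=\int_0^t b=0$ near $0^+$. Your bound $0\le B(\mu)\le e^{-\mu}\|b\|_1$ needs no external theorem and gives exponential decay. The paper's route is the one that would extend to densities not vanishing near $0$ but with $\alpha(t)=O(t^\gamma)$.

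For (3), the paper shows only that $b\notin D(B)$ because $b^2\notin\mathrm{L}^1$, i.e.\ that $B$ is not everywhere defined. Turning this into unboundedness requires the closedness from (4), density of $D(B)$ and the closed graph theorem, which the paper leaves implicit. Your normalized functions $x_n=n^4\chi_{[n,n+n^{-4}]}$ with $\|Bx_n\|_1=n^2$ prove unboundedness in norm directly and independently of (4).

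For (4), the paper merely refers to Miyadera's example, whereas your a.e.-subsequence argument is a complete proof.

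Your domain remark is accurate and worth keeping. Closedness holds for the maximal domain $\{x\in\mathrm{L}^1(\mathbb{R}^+): bx\in\mathrm{L}^1(\mathbb{R}^+)\}$, which is the domain the paper itself uses in (3). It fails for the restriction to $\mathrm{W}^{1,1}(\mathbb{R}^+)$: approximate $\chi_{[0,1/2]}$ by smooth functions supported in $(0,1)$, where $b=0$. This sits uneasily with the requirement $D(C)=D(A)$ in the definition of $\mathcal{GL}_A(\mathbb{X})$.

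A trivial slip: $[1,2]$ and $[2,2+2^{-4}]$ share the point $2$, so the intervals are disjoint only up to a null set, which changes nothing.
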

\begin{proof}
\begin{enumerate}
\item The first assertion is true because of the identity
\[
\int^\infty_0 |b(t)|dt = \sum_{n=1}^\infty \frac{1}{n^2} < \infty .
\]
Further, if $f\in D(G)=\mathrm{W}^{1,1}(\mathbb{R}^+)$, 
\[
f(t)=f(0)+\int^t_0f'(s)ds,
\]
so 
\[
|f(t)| \le \| f'\|_1
\quad \text{ a.e. } t\in \mathbb{R}^+,
\]
or $f\in \mathrm{L}^\infty([0,\infty))$. Therefore, 
\[
b(\cdot )f(\cdot) \in \mathrm{L}^1([0,\infty)).
\]
This implies that $D(G)\subset D(B)$.

\item Since $b(t)=0$, for $ t\in [0,1]$, by \cite[Theorem 1, p. 181]{wid}, with the normalized 
\[
\alpha (t) :=\int^t_0 b(s)ds,
\]
we have
\[
\limsup_{\mu\to\infty} |\mu^\gamma  B(\mu)|
\le 	\limsup_{t\to 0^+} \left| \frac{\alpha (t  )}{t^\gamma }  \right|
 =	 \limsup_{t\to 0^+} \left| \frac{1}{t^\gamma } \int^t_0 b(s)ds \right| 
 =0.
\]

\item It suffices to show that $\mathrm{L}^1([0,\infty))\setminus  D(B)\not= \emptyset$. In fact, as 
\[
b(t)\cdot b(t) =  \sum_{n=1}^\infty n^4 \chi_{[n,n+n^{-4}]}(t),
\]
we have 
\[
\int^\infty_0 |b^2(t)|dt 
\ge \sum_{n=1}^\infty {n^4\cdot n^{-4}} 
=\infty .
\]
In other words, $b\in\mathrm{L}^1([0,\infty))\setminus  D(B)$. This implies $\mathrm{L}^1([0,\infty))\setminus  D(B)\not= \emptyset$.

\item The closedness of $B$ can be shown following the lines of \cite[Example, p. 309]{miy}.
\end{enumerate}
The proof is complete.
\end{proof}

\begin{corollary}
The evolution equation
\begin{equation*}
u'(t)=(G+B)u(t), \quad u(t)\in \mathbb{X} ,
\end{equation*}
where $\mathbb{X} :=\mathrm{L}^1([0,\infty))$, is well-posed. In other words, the perturbed operator $G+B$ generates a $\mathrm{C}_0$-semigroup in $\mathrm{L}^1([0,\infty))$.
\end{corollary}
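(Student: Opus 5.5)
The corollary should follow from Theorem~\ref{the gen} once we check that $B\in\mathcal{GL}_A(\mathbb{X})$ with $A=G$ and $\mathbb{X}=\mathrm{L}^1([0,\infty))$. That is, we need $D(G)\subset D(B)$ and
\[
\sup_{\mu>\omega_0}(\mu-\omega_0)\,\|BR(\mu,G)\|<\infty .
\]
The translation semigroup is contractive, so $M=1$ and $\omega_0=0$. The required quantity is then $\sup_{\mu>0}\mu\|BR(\mu,G)\|$.

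\textbf{Step 1: the domain.} Part (1) of the preceding claim gives $D(G)\subset D(B)$. Every $f\in\mathrm{W}^{1,1}(\mathbb{R}^+)$ is bounded, so $bf\in\mathrm{L}^1$. The paper's definition asks for $D(C)=D(A)$; we restrict $B$ to $D(G)$, which is harmless because only $B$ on $D(G)$ enters $A+B$.

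\textbf{Step 2: the resolvent bound.} From the displayed estimate we have $\|BR(\mu,G)x\|_1\le |\hat b(\mu)|\,\|x\|_1$, where $\hat b$ is the Laplace transform of $b$ (the paper writes $B(\mu)$). It then suffices to show $\sup_{\mu>0}\mu|\hat b(\mu)|<\infty$.
\begin{itemize}
\item For large $\mu$ this is part (2) of the claim with $\gamma=1$, i.e.\ \eqref{5-10}. Directly, since $b=0$ on $[0,1)$ we get $\hat b(\mu)\le e^{-\mu}\|b\|_1$, so $\mu|\hat b(\mu)|\to0$.
\item For small $\mu$ we have $\mu|\hat b(\mu)|\le\mu\|b\|_1$, which is bounded on bounded sets.
\end{itemize}
By continuity of $\mu\mapsto\mu\hat b(\mu)$, the supremum over $(0,\infty)$ is finite. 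This gives \eqref{5-7} and hence $\|B\|_G<\infty$.

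\textbf{Step 3: conclusion.} Theorem~\ref{the gen} now says that $G+B$ generates a $\mathrm{C}_0$-semigroup with $\|T_{G+B}(t)\|\le e^{\|B\|_G t}$, which is the claimed well-posedness. Alternatively, one can take the constant family $B(t)\equiv B$ in Theorem~\ref{the main}: \ref{ItemA1} holds trivially, and \ref{ItemA2} holds with zero derivative. The direct route through Theorem~\ref{the gen} is cleaner.

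\textbf{Main obstacle.} The delicate point is the estimate $\|BR(\mu,G)x\|_1\le|\hat b(\mu)|\|x\|_1$. The paper's computation mixes up the convolution variables. I would redo it with the explicit resolvent of the right translation,
\[
R(\mu,G)x(\xi)=\int_0^\xi e^{-\mu(\xi-\sigma)}x(\sigma)\,d\sigma .
\]
Then
\[
\|BR(\mu,G)x\|_1\le\int_0^\infty|b(\xi)|\int_0^\xi e^{-\mu(\xi-\sigma)}|x(\sigma)|\,d\sigma\,d\xi .
\]
Applying Fubini, this equals
\[
\int_0^\infty|x(\sigma)|\int_\sigma^\infty|b(\xi)|e^{-\mu(\xi-\sigma)}\,d\xi\,d\sigma .
\]
So the correct bound is $\sup_\sigma\int_0^\infty|b(\sigma+\eta)|e^{-\mu\eta}\,d\eta$, a supremum over translates of $b$, and not $\hat b(\mu)$ alone. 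For the specific $b$ in \eqref{5-9}, the translate starting at $\sigma=n$ gives roughly $n^2\min(n^{-4},1/\mu)$. Its supremum over $n$ behaves like $\mu^{-1/2}$, so $\mu$ times it is not bounded.

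I would therefore check this carefully, and if it fails, modify $b$ so that the translate sup-bound is $O(1/\mu)$. This holds exactly when $b$ is uniformly locally integrable in the sense that $\sup_\sigma\int_\sigma^{\sigma+h}|b|=O(h)$, which is essentially when $b$ is bounded. This is the step where I expect genuine difficulty with the example as stated.
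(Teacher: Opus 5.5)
\textbf{There is a genuine gap: Step~2 is false, and your own final paragraph is what shows it.} Your Steps~1--3 follow the paper's argument exactly: the paper's proof just asserts \eqref{5-7} and applies the generation theorem, Theorem~\ref{the gen}. As written, the proposal therefore does not prove the corollary.

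Step~2 uses $\|BR(\mu,G)x\|_1\le|\hat b(\mu)|\,\|x\|_1$, and this bound is wrong. With the correct resolvent $R(\mu,G)x(\xi)=\int_0^\xi e^{-\mu(\xi-\sigma)}x(\sigma)\,d\sigma$, the operator $BR(\mu,G)$ has the nonnegative kernel $b(\xi)e^{-\mu(\xi-\sigma)}\chi_{\{\sigma<\xi\}}$. Its $\mathrm{L}^1$-norm is therefore exactly $\sup_{\sigma\ge0}\int_0^\infty b(\sigma+\eta)e^{-\mu\eta}\,d\eta$. Taking $\sigma=n$ and $\mu=n^4$ gives $\mu\|BR(\mu,G)\|\ge(1-e^{-1})\sqrt{\mu}$. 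So \eqref{5-7} fails and $B\notin\mathcal{GL}_G(\mathbb{X})$. Neither Theorem~\ref{the gen} nor your constant-family use of Theorem~\ref{the main} can be invoked.

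Changing $b$ cannot rescue this route. The corollary is about this particular $b$, and more fundamentally every $C\in\mathcal{GL}_A(\mathbb{X})$ is bounded on $D(A)$. Indeed, boundedness of $CR(\mu,A)$ makes $C$ $A$-bounded, and $\mu R(\mu,A)x\to x$ in the graph norm for $x\in D(A)$. Hence $\|Cx\|=\lim_{\mu\to\infty}\|C\mu R(\mu,A)x\|\le M\|C\|_A\|x\|$. Your remark that $b$ would have to lie in $\mathrm{L}^\infty$ is a special case of this. For such a $b$ the operator $B$ is bounded, which contradicts part (3) of the claim.

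\textbf{The missing idea: use the Miyadera--Voigt theorem instead of \eqref{5-7}.} The corollary's conclusion is still true; this is exactly Miyadera's example, which the paper refers to. Note that the right translation has generator domain $D(G)=\{f\in\mathrm{W}^{1,1}(\mathbb{R}^+): f(0)=0\}$, which is smaller than the paper's $\mathrm{W}^{1,1}$ and harmless. On this domain $B$ is $G$-bounded, since $\|Bx\|_1\le\|b\|_1\|x\|_\infty\le\|b\|_1\|Gx\|_1$. For $x\in D(G)$, Fubini gives
\[
\int_0^\alpha\|BT(t)x\|_1\,dt=\int_0^\infty|x(\sigma)|\int_\sigma^{\sigma+\alpha}b(\xi)\,d\xi\,d\sigma\le\|x\|_1\sup_{\sigma\ge0}\int_\sigma^{\sigma+\alpha}b(\xi)\,d\xi .
\]
For $\alpha\le1/2$, a window of length $\alpha$ meets at most two of the intervals $[n,n+n^{-4}]$. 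Each contributes at most $n^2\min(\alpha,n^{-4})\le\sqrt\alpha$, so the left-hand side is at most $2\sqrt\alpha\,\|x\|_1$, and $2\sqrt\alpha<1$ once $\alpha<1/4$. The Miyadera--Voigt theorem then shows that $(G+B,D(G))$ generates a $\mathrm{C}_0$-semigroup on $\mathrm{L}^1([0,\infty))$.

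The point is that the local integrals $\sup_\sigma\int_\sigma^{\sigma+\alpha}|b|$ only need to become small (here like $\sqrt\alpha$). They do not need to be $O(\alpha)$, which is what your criterion, and membership in $\mathcal{GL}_G(\mathbb{X})$, forces.
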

\begin{proof}
The operator $B$ satisfies \eqref{5-7}, so \cite[Theorem 2.4, p. 493]{buihuyluomin} applies.
\end{proof}

We next define
\[
B(t) := \sin (t)B, \quad t\in [0,2\pi].
\]
Note that for each fixed $t\in [0,2\pi]$,
$B(t)$ is a closed operator (see \cite[Example]{miy}). Since $B(t)$ is a closed operator for each $t$ 
\begin{align*}
B(t)R(\mu,G) x(\xi)  &=\int^\infty _0 e^{-\mu s} [B(t)T(s)x](\xi)ds
\\	
&=\sin (t)  \int^\infty _0 e^{-\mu s} b(\xi )x(s+\xi)ds .
\end{align*}
Therefore, for $\mu>\omega_0$
\begin{align}
\| B(t)R(\mu,G)x(\cdot ) \|_1 &=\sin (t) \int^\infty_{-\infty}\left|  \int^\infty _0 e^{-\mu s}b(\xi) x(s+\xi)ds \right|d\xi \nonumber\\
&= \sin (t)  \int^\infty _0  e^{-\mu s} \left| \int^\infty_{-\infty} b(\xi) x(s+\xi) d\xi \right|  ds \notag\\
&\le  \frac{K\| x\|_1 }{\mu-\omega_0}. \label{5-16}
\end{align}
\begin{corollary}
The nonautonomous evolution equation
\[
u'(t) = (G+B(t))u(t), \quad u(t)\in \mathbb{X},
\]
where $\mathbb{X} :=\mathrm{L}^1([0,\infty))$ is well-posed. That is, this equation admits a unique evolution family $\left (U(t,s)\right )_{0\le s\le t \le 1}$.
\end{corollary}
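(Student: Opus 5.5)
The corollary follows once we check that the family $B(t)=\sin(t)B$ on $\mathbb{X}=\mathrm{L}^1([0,\infty))$ satisfies \ref{ItemA1} and \ref{ItemA2}; Theorem~\ref{the main} then gives existence and uniqueness of the evolution family. The interval $[0,2\pi]$, or any compact subinterval such as $[0,1]$, is handled by the rescaling remark made before Theorem~\ref{the main}.

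\textbf{Membership.} From the bound \eqref{5-7} we have $B\in\mathcal{GL}_G(\mathbb{X})$ with $\|B\|_G\le K/M<\infty$. The estimate \eqref{5-16} gives the same for $B(t)$, uniformly in $t$. Since $\mathcal{GL}_G(\mathbb{X})$ is a normed space, $B(t)$ belongs to it and
\[
\|B(t)\|_G=|\sin t|\,\|B\|_G .
\]

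\textbf{Assumption (A1).} For $t,s\in[0,2\pi]$,
\[
\|B(t)-B(s)\|_G=|\sin t-\sin s|\,\|B\|_G\le |t-s|\,\|B\|_G .
\]
So $t\mapsto B(t)$ is Lipschitz, hence continuous, into $\mathcal{GL}_G(\mathbb{X})$.

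\textbf{Assumption (A2).} Fix $\mu>\omega_0$. Then $B(t)R(\mu,G)=\sin(t)\,BR(\mu,G)$, where $BR(\mu,G)\in\mathcal{L}(\mathbb{X})$ is a fixed operator. This map is $C^1$ into $\mathcal{L}(\mathbb{X})$, with derivative
\[
\frac{d}{dt}\bigl[B(t)R(\mu,G)\bigr]=\cos(t)\,BR(\mu,G),
\]
which depends continuously on $t$ in operator norm. By \eqref{5-7},
\[
\sup_{t}\Bigl\|\frac{d}{dt}\bigl[B(t)R(\mu,G)\bigr]\Bigr\|\le \|BR(\mu,G)\|\le \frac{K}{\mu-\omega_0}\to 0 \quad (\mu\to\infty),
\]
so the $\limsup$ in \ref{ItemA2} is in fact $0$.

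The only nontrivial input is \eqref{5-7}, i.e.\ that $B\in\mathcal{GL}_G(\mathbb{X})$. This rests on the Laplace-transform bound $|\mu^\gamma B(\mu)|$ bounded, with $\gamma\ge1$, established in the preceding claim. I would double-check that the constant there is uniform for all $\mu>\omega_0$ and not only asymptotically. If it is only asymptotic, I would shift $\omega_0$ upward, which is harmless by renorming, so that the bound holds on the whole ray where the supremum defining $\|\cdot\|_G$ is taken. Once that is secured, Theorem~\ref{the main}(1) gives existence and Theorem~\ref{the main}(2) gives uniqueness of $(U(t,s))_{0\le s\le t\le 1}$.
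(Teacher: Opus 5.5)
You follow the paper's route exactly: you verify \ref{ItemA1} and \ref{ItemA2} for $B(t)=\sin(t)B$ from \eqref{5-7}/\eqref{5-16} and then invoke Theorem~\ref{the main}. The argument fails at precisely the input you flagged, but not because of non-uniformity in $\mu$. The estimate \eqref{5-7} is false, and it fails as $\mu\to\infty$, so shifting $\omega_0$ cannot repair it.

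In fact no operator that is unbounded on $D(A)$ can lie in $\mathcal{GL}_A(\mathbb{X})$. If $C\in\mathcal{GL}_A(\mathbb{X})$ and $x\in D(A)$, then $Cx=\mu CR(\mu,A)x-CR(\mu,A)Ax$ for all $\mu>\omega_0$, so
\[
\|Cx\|\le M\|C\|_A\Bigl(\frac{\mu}{\mu-\omega_0}\|x\|+\frac{\|Ax\|}{\mu-\omega_0}\Bigr)\longrightarrow M\|C\|_A\|x\|\qquad(\mu\to\infty).
\]
For $b$ as in \eqref{5-9}, every smooth bump $f\ge 0$ supported in $(n,n+n^{-4})$ belongs to $D(G)$ and satisfies $\|Bf\|_1=n^2\|f\|_1$. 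Hence $\|B\|_G=\infty$, and $B(t)\notin\mathcal{GL}_G(\mathbb{X})$ whenever $\sin t\neq 0$.

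You can also see this directly. The resolvent of the right shift is $R(\mu,G)x(\xi)=\int_0^\xi e^{-\mu(\xi-u)}x(u)\,du$. The expression used in the example to derive \eqref{5-7} instead equals $e^{-\mu\xi}\int_0^\infty e^{-\mu v}x(v)\,dv$, which is not the resolvent. Testing on $x=\varepsilon^{-1}\chi_{[n,n+\varepsilon]}$ with $\varepsilon\to 0$ gives $\|BR(\mu,G)\|\ge \frac{n^2}{\mu}\bigl(1-e^{-\mu n^{-4}}\bigr)$. At $\mu=n^4$ this yields $\mu\|BR(\mu,G)\|\ge (1-e^{-1})n^2\to\infty$.

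So \ref{ItemA1} fails: $B(\cdot)$ does not even take values in $\mathcal{GL}_G(\mathbb{X})$, and Theorem~\ref{the main} does not apply. Your \ref{ItemA2} computation does survive, since it only needs $\sup_\mu\|BR(\mu,G)\|\le\|b\|_1<\infty$, but that is not enough.

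The paper's own proof of this corollary rests on the same estimate (through \eqref{5-16}) and has the same gap. The asserted well-posedness may well be true, but proving it needs a tool built for genuinely unbounded perturbations. One option is a Miyadera--Voigt-type estimate. Here $\int_0^h\|BT(r)x\|_1\,dr\le\bigl(\sup_{u\ge 0}\int_u^{u+h}|b|\bigr)\|x\|_1$, and the bracket tends to $0$ as $h\to 0$ because $b\in\mathrm{L}^1$. Another option is explicit integration along characteristics, which gives $\|U(t,s)\|\le e^{\|b\|_1}$. The $\mathcal{GL}_G$ framework cannot deliver this result.
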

\begin{proof}
As shown in \eqref{5-16}, $B(\pi/2)R(\mu,G) \in \mathcal{GL}_G(\mathbb{X})$.
Moreover,
\[
B(t)R(\mu,G) =\sin(t)B(\pi/2)R(\mu,G)
\]
is continuous in $t\in [0,2\pi]$. Therefore,
\ref{ItemA1} is satisfied for $B(\cdot )$.

Since $B(t)=\sin(t)B$, for each $t\in [0,2\pi]$, $B(t)R(\mu,G)\in \mathcal{L}(\mathbb{X})$
\[
B'(t)R(\mu,G)= \cos (t) BR(\mu,G).
\]
By \eqref{5-16},
\begin{align*}
\|B'(t)R(\mu,G)\| 
& \le 
\|BR(\mu,G)\| 
\\
& \le 
\frac{K}{\mu-\omega_0}.
\end{align*}
This yields that \ref{ItemA2} is satisfied. Therefore, Theorem~\ref{the main} applies.
\end{proof}
\end{example}

\begin{example}
Consider the following problem 
\begin{equation}\label{exa 3}
\left\{
\begin{aligned}
\frac{\partial u(x,t)}{\partial t} 
&= 
\frac{\partial^2 u(x,t)}{\partial x^2} +\sin (t) b(x)u(x,t),
&&
x\in \mathbb{R},
&& t \ge s,
\\
u(x,s) &=\varphi (x),
&&
x\in \mathbb{R}.
\end{aligned}
\right .
\end{equation}
Here, $s\in \mathbb{R}$ and $b(\cdot )\in \mathrm{L}^1(\mathbb{R})$ such that 
\[
b(t):= b_+(t)+b_-(t),
\]
where
\[
b_+(t):=  
\sum_{n=1}^\infty n^2 \chi_{[n,n+n^{-4}]}(t),\quad
b_-(t):= b_+(-t).
\]
\end{example}
We define $\mathbb{X}:=\mathrm{L}^1(\mathbb{R})$, $A:= \frac{d^2}{dx^2}$ where
\[
D(A):=\{ y\in \mathbb{X} \colon y',\, y'' \text{ exist as elements in } \mathbb{X}\}.
\]
%so $D(A)$ consists of all functions $y\in \mathbb{X}$ that are absolutely continuous together with their derivative $y'$ such that  $y'$ and $y''$ are elements of $\mathbb{X}$,
%and 
It is well known that $D(A)$ is dense everywhere in $\mathrm{L}^1(\mathbb{R})$. For each $t \in \mathbb{R}$ we define an operator of multiplication $B$ in $\mathrm{L}^1(\mathbb{R})$ by
\[
[By](x)=b(x)y(x)
\]
and 
\[
B(t):=\sin (t)B
\]
for each $t \in \mathbb{R}$ and $y\in \mathrm{L}^1(\mathbb{R})$ such that $b(\cdot )y(\cdot ) \in \mathrm{L}^1(\mathbb{R})$. 
Generally, $B(t)$ is not an everywhere defined operator, so is not a bounded operator. 
In fact, when $\sin (t)\not=0$, $B(t)$ is not defined at $b\in \mathrm{L}^1(\mathbb R)$ because $b\not\in \mathrm{L}^2(\mathbb R)$.
\begin{claim}
The operator $B(t)$ is a linear operator in $\mathbb{X}$ and 
$D(A) \subset D(B(t))$.	
\end{claim}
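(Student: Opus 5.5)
Linearity of $B(t)$ is immediate. Its natural domain is $D(B(t))=\{y\in \mathbb{X} : b(\cdot)y(\cdot)\in \mathrm{L}^1(\mathbb{R})\}$; when $\sin t=0$ we take $D(B(t))=\mathbb{X}$. This set is a linear subspace, and $y\mapsto \sin(t)\,b\,y$ is linear on it because pointwise multiplication by a fixed measurable function is linear. So the real content of the claim is the inclusion $D(A)\subset D(B(t))$. Since $D(B(t))\supset D(B)$ for every $t$, it suffices to prove $D(A)\subset D(B)$.

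The plan mirrors the argument in the first example, where $\mathrm{W}^{1,1}(\mathbb{R}^+)\subset \mathrm{L}^\infty$ was used. Let $y\in D(A)$. Then $y\in \mathrm{L}^1(\mathbb{R})$ and $y',y''\in \mathrm{L}^1(\mathbb{R})$, so in particular $y$ is absolutely continuous with $y'\in \mathrm{L}^1(\mathbb{R})$. The key step is the one-dimensional Sobolev bound $\|y\|_\infty\le \|y'\|_1$. To obtain it, first note that $y(x)=y(x_0)+\int_{x_0}^x y'(s)\,ds$, so $\lim_{x\to\pm\infty}y(x)$ exist. Because $y\in \mathrm{L}^1(\mathbb{R})$, both limits must be $0$. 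Hence $y(x)=\int_{-\infty}^x y'(s)\,ds$, and therefore $|y(x)|\le \|y'\|_1$ for all $x$.

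It then follows that
\[
\int_{\mathbb{R}}|b(x)y(x)|\,dx\le \|y\|_\infty\|b\|_1\le \|y'\|_1\,\|b\|_1<\infty.
\]
Here $\|b\|_1\le \|b_+\|_1+\|b_-\|_1=2\sum_{n\ge1}n^{-2}<\infty$, exactly as computed in the claim of the first example. Consequently $by\in \mathrm{L}^1(\mathbb{R})$, so $y\in D(B)\subset D(B(t))$ for every $t$, which proves the claim. In fact the same estimate gives $\|B(t)y\|_1\le \|b\|_1\|y'\|_1$, which will be useful later when we verify the $\mathcal{GL}_A$ bound.

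The only delicate point is justifying that $y$ vanishes at $\pm\infty$, or at least that $y$ is bounded. I would handle this as above: the limits at $\pm\infty$ exist because $y'\in \mathrm{L}^1(\mathbb{R})$, and integrability of $y$ forces them to be zero. Only $y'\in \mathrm{L}^1$ is needed here; the condition $y''\in \mathrm{L}^1$ plays no role in this step.
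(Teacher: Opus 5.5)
Your proof is correct and follows the paper's route. Elements of $D(A)$ are bounded because $y'\in \mathrm{L}^1(\mathbb{R})$, and $b\in \mathrm{L}^1(\mathbb{R})$ then gives $by\in \mathrm{L}^1(\mathbb{R})$; the only difference is that you prove $y$ vanishes at $\pm\infty$ to get $\|y\|_\infty\le \|y'\|_1$, whereas the paper uses the cruder $|\phi(t)|\le |\phi(0)|+\|\phi'\|_1$, so the step you call delicate is not needed for boundedness, though it does yield your cleaner bound $\|B(t)y\|_1\le \|b\|_1\|y'\|_1$.
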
	
\begin{proof}
Obviously, $B(t)$ is a linear operator in $\mathbb{X}$. Next, let $\phi\in D(A)$. Clearly, 
$\phi \in \mathrm{L}^\infty (\mathbb{R})$ because
\begin{align*}
| \phi(t)| 
& \le \int^t_0 |\phi'(\xi)|d\xi +|\phi(0)| 
\\
& \le \| \phi'(\cdot )\|_1 +|\phi(0)| ,
\end{align*}
so 
\[
\sin(t) b(\cdot )\phi (\cdot)\in \mathrm{L}^1(\mathbb{R}).
\]
This yields that $D(A)\subset D(B(t))$.	
\end{proof}
Next, we note that the following claim is well known in the theory of semigroups (see \cite[Corollary II4.9, p.~106]{engnag}). However, 
by a direct calculation of $R(\mu,A)$ it can easily be shown and this will be subsequently used to verify Theorem \ref{the main}'s conditions. 
\begin{claim}
Linear operator $A$ generates a $\mathrm{C}_0$-semigroup in $\mathrm{L}^1(\mathbb{R})$.
\end{claim}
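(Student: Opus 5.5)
We plan to prove that $A=\frac{d^2}{dx^2}$ with the stated domain generates a $\mathrm{C}_0$-semigroup on $\mathrm{L}^1(\mathbb{R})$ by computing $R(\mu,A)$ explicitly and then applying the Hille--Yosida Theorem (in the form $\|R(\mu,A)^k\|\le \mu^{-k}$, or equivalently $\|R(\mu,A)\|\le 1/\mu$, which gives a contraction semigroup with $M=1$, $\omega_0=0$). Density of $D(A)$ is already granted, since it contains $\mathrm{C}_c^\infty(\mathbb{R})$, which is dense in $\mathrm{L}^1(\mathbb{R})$. Closedness of $A$ will follow once $\rho(A)\neq\emptyset$ is shown.

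For $\mu>0$ and $f\in\mathrm{L}^1(\mathbb{R})$, we solve $\mu y-y''=f$ by convolution with the Green's kernel
\[
g_\mu(x)=\frac{1}{2\sqrt{\mu}}e^{-\sqrt{\mu}|x|},\qquad y=g_\mu * f .
\]
First, we check $\|g_\mu\|_1=1/\mu$, so Young's inequality yields $\|y\|_1\le \|f\|_1/\mu$. Second, we check that $y\in D(A)$ and $\mu y-y''=f$. For this we differentiate under the integral: $y'=g_\mu' * f$ with $g_\mu'\in\mathrm{L}^1$, so $y'\in\mathrm{L}^1$ and $y'$ is absolutely continuous. Using the jump of $g_\mu'$ at $0$ (distributionally $g_\mu''=\mu g_\mu-\delta$), we obtain $y''=\mu y-f\in\mathrm{L}^1$. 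It is cleanest to verify this first for $f\in\mathrm{C}_c$ and then pass to general $f\in\mathrm{L}^1$ by density and the $\mathrm{L}^1$-bounds on $g_\mu,g_\mu'$.

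Third, we prove injectivity of $\mu-A$ on $D(A)$. If $y\in D(A)$ and $\mu y=y''$, then $y=c_1e^{\sqrt\mu x}+c_2e^{-\sqrt\mu x}$, and integrability over $\mathbb{R}$ forces $c_1=c_2=0$. Hence $\mu\in\rho(A)$, $R(\mu,A)f=g_\mu*f$, and $\|R(\mu,A)\|\le 1/\mu$ for every $\mu>0$. Iterating gives $\|R(\mu,A)^k\|\le\mu^{-k}$, and Hille--Yosida yields the contraction semigroup (the Gaussian heat semigroup). As a byproduct, this explicit formula is exactly what will be used afterwards to estimate $\|B(t)R(\mu,A)\|$.

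The main obstacle is the regularity step: justifying that $g_\mu*f$ has $y'$ and $y''$ in $\mathrm{L}^1$ in the precise sense of the domain, including the absolute continuity of $y'$, for arbitrary $f\in\mathrm{L}^1$ rather than smooth $f$. We would handle it by the density argument above, combined with closedness of the weak derivative in $\mathrm{L}^1$. The remaining steps are routine.
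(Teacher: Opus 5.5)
Your proposal is correct and follows essentially the same route as the paper: write $R(\mu,A)f$ as convolution with the Green's kernel $\frac{1}{2\sqrt{\mu}}e^{-\sqrt{\mu}|x|}$, bound it by $\|f\|_1/\mu$ via Young's inequality, and conclude with the Hille--Yosida Theorem using the density of $D(A)$. Your extra verifications fill in steps the paper leaves implicit: that $g_\mu * f$ lies in $D(A)$ and solves $\mu y - y'' = f$, and that $\mu - A$ is injective, so that $\mu\in\rho(A)$ and $A$ is closed.
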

\begin{proof}
We will use elementary facts about the existence and uniqueness of a bounded solution of inhomogeneous linear equations on the line (see for example, \cite{mal}). Consider the equation
\[
\mu y -y'' = f.
\]
Using the Green's function method, one obtains that for each $\mu>0$ and $f\in \mathrm{L}^1(\mathbb{R})$
\[
R(\mu, A)f (t)
= \frac{1}{2\sqrt{\mu}}
\left(
\int^t_{-\infty} e^{-\sqrt{\mu}(t-s)}f(s) ds
+
\int^{\infty}_t e^{\sqrt{\mu}(t-s)}f(s) ds
\right).
\]
Before proceeding we denote by
\[
h_1(s) := \begin{cases}
e^{-\sqrt{\mu} s} & \text{ if } s \le 0,\\
0 & \text{ if } s> 0,
\end{cases}
\]
and
\[
h_2(s) := \begin{cases}
0 & \text{ if } s \le 0,
\\
e^{\sqrt{\mu}s} & \text{ if } s>0.
\end{cases}
\]
Then, $h_1,\,h_2\in \mathrm{L}^1(\mathbb{R})$ with 
\[
\| h_1\|_1 = \frac{1}{\sqrt{\mu}}= \| h_2\|_1.
\]
Therefore, by Young's inequality
\begin{align*}
\| R(\mu, A)f \|_1 & = \frac{1}{2\sqrt{\mu}}\int^\infty_{-\infty} \left| \int^t_{-\infty} e^{-\sqrt{\mu} (t-s)}|f(s)|ds +\int^\infty_t e^{\sqrt{\mu}(t-s)} |f(s)|ds    \right|  dt 
\\
&=\frac{1}{2\sqrt{\mu}} \int^\infty_{-\infty}  \left( \int^\infty_0 e^{-\sqrt{\mu} \eta} |f(t-\eta)|d\eta  +\int^0_{-\infty} e^{\sqrt{\mu}\eta} |f(t-\eta)| d\eta       \right) dt
\\
&\le \frac{1}{2\sqrt{\mu}}  \int^\infty_{-\infty}  \left( \int^\infty_{-\infty}| h_1(\eta ) f(t-\eta)|d\eta  +\int^\infty_{-\infty} |h_2(\eta) f(t-\eta) |d\eta       \right) dt 
\\
&\le  \frac{1}{2\sqrt{\mu}}  (\|h_1\|_1+\|h_2\|_1) \| f\|_1 
\\
&= \frac{\|f\|_1}{\mu}.
\end{align*}
Since $D(A)$ is dense everywhere in $\mathrm{L}^1(\mathbb{R})$, this estimates, by the Hille--Yosida Theorem, shows that $A$ generates a $\mathrm{C}_0$-semigroup in $\mathrm{L}^1(\mathbb{R})$. 
\end{proof}
We next estimate $\| BR(\mu,A)\|$ to show that $B(\cdot )$ satisfies \ref{ItemA1} of Theorem \ref{the main}. 
In fact, we have
\begin{align*}
& \| BR(\mu, A)f \|_1
\\
& =
\frac{1}{2\sqrt{\mu}} \int^\infty_{-\infty}| b(t)| \left|
\int^t_{-\infty} e^{-\sqrt{\mu}(t-s)}f(s) ds
+
\int^{\infty}_t e^{\sqrt{\mu}(t-s)}f(s) ds
\right|dt
\nonumber\\
&\le \frac{1}{2\sqrt{\mu}} \int^\infty_{0} (|b_+(t)|+|b_-(t)| )   \left|
\int^\infty_{0} e^{-\sqrt{\mu}\eta}f(t-\eta) d\eta
-
\int^{\infty}_0 e^{-\sqrt{\mu}\eta }f(t+\eta)d\eta  
\right|dt .
\end{align*}
We have 
\begin{align*} 	
 \int^\infty_{0} |b_+(t)|    
\int^\infty_{0} e^{-\sqrt{\mu}\eta}  \left|f(t-\eta) \right|  d\eta 
dt &= 
 \int^\infty_{0}  e^{-\sqrt{\mu}\eta} 
\int^\infty_{0}   |b_+(t)|   \left| f(t-\eta) \right|  dt d\eta 	.
\end{align*}
Set $u=t-\eta$,
\begin{align*} 	 
 \int^\infty_{0}  e^{-\sqrt{\mu}\eta } 
\int^\infty_{0}   |b_+(t)|   \left| f(t-\eta) \right|  dt d\eta 	 
 &=  \int^\infty_{0}  e^{-\sqrt{\mu}(t-u)} 
\int_{-\infty}^{0}   |b_+(t)|   \left| f(u) \right| du	 dt \\
&=  \int^\infty_{0}  e^{-\sqrt{\mu}t}  |b_+(t)|  dt
\int_{-\infty}^{0}  e^{\sqrt{\mu}u}   \left| f(u) \right|  dt du \\
&\le B_+(\sqrt{\mu}) \| f\|_1 ,
\end{align*}
where $B_+(s)$ is the Laplace transform of $b_+(\cdot)$. Similarly,
 \begin{align*}
\int^\infty_{0} |b_+(t)|     \left|
\int^{\infty}_0 e^{-\sqrt{\mu}\eta }f(t+\eta)d\eta   \right| dt 
 \le B_+(\sqrt{\mu}) \| f\|_1.
\end{align*}
Therefore,
\begin{align} \label{5-21}
\int^\infty_{0} |b_+(t)|     \left|
\int^t_{-\infty} e^{-\sqrt{\mu}(t-s)}f(s) ds
\right|dt \le 2B_+(\sqrt{\mu}) \| f\|_1.
\end{align}
Similarly, we can show that
\begin{align} \label{5-22}
\int^\infty_{0} |b_-(t)|     \left|
\int^t_{-\infty} e^{-\sqrt{\mu}(t-s)}f(s) ds
\right|dt \le 2B_+(\sqrt{\mu}) \| f\|_1.
\end{align}
As we have chosen $b$ such that $b_+$ is exactly the function $b$ in the previous example, by \eqref{5-10} there exist constants $K,\, \omega_1>0$ such that
\[
| B_+(s) | \le \frac{K}{s^2 -\omega_1}.
\]
Consequently, by \eqref{5-21} and \eqref{5-22} we see that
\begin{align*}
\| BR(\mu, A)f \|_1 
&\le 
 4B_+(\sqrt{\mu}) \| f\|_1\\
&\le  
 \frac{4K}{\mu  -\omega_1}\| f\|_1,
\end{align*}
so, there exist constants $K$ and $\omega_1$ such that for all $\mu>\omega_1$
\begin{equation}\label{5-26}
\| BR(\mu, A)f \| \le \frac{4K}{\mu-\omega_1}.
\end{equation}
\begin{corollary}
With above notations and assumptions on $b$, the operator $B$ is unbounded and the perturbation operator $A+B$ generates a $\mathrm{C}_0$-semigroup in $\mathbb{X}$.
\end{corollary}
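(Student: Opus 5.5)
The corollary has two parts: that $B$ is unbounded, and that $A+B$ generates a $\mathrm{C}_0$-semigroup. The plan is to obtain both from facts already proved in this example, with Theorem~\ref{the gen} supplying the generation result.

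\emph{Step 1: $B\in\mathcal{GL}_A(\mathbb{X})$.} The previous claim gives $D(A)\subset D(B)$. By the resolvent computation, $\|R(\mu,A)\|\le 1/\mu$ for all $\mu>0$, so we may take $M=1$ and $\omega_0=0$. Estimate \eqref{5-26} gives $\|BR(\mu,A)\|\le 4K/(\mu-\omega_1)$ for $\mu>\omega_1$, so
\[
\sup_{\mu>\omega_1}(\mu-\omega_1)\|BR(\mu,A)\|\le 4K<\infty .
\]
The definition of $\|\cdot\|_A$ uses the growth bound $\omega_0$ rather than $\omega_1$. We handle this by replacing the growth constant by $\omega_0'=\max(\omega_1,0)$, which is still a valid bound since $\|T_A(t)\|\le 1\le e^{\omega_0' t}$.

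Alternatively, we keep $\omega_0=0$ and use the bound $\|BR(\mu,A)\|\le \|BR(\mu_0,A)\|\,\|(\mu_0-\mu)R(\mu,A)+I\|$, which follows from the resolvent identity. For $\mu\in(0,\omega_1+1]$ it gives $\|BR(\mu,A)\|\le \|BR(\mu_0,A)\|\,(|\mu_0-\mu|/\mu+1)$. Multiplying by $\mu$ keeps this bounded on $(0,\omega_1+1]$, so the supremum over all $\mu>0$ is finite.

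Either way $\|B\|_A<\infty$, and Theorem~\ref{the gen} then shows that $A+B$ generates a $\mathrm{C}_0$-semigroup with $\|T_{A+B}(t)\|\le e^{(\omega_0+\|B\|_A)t}$.

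\emph{Step 2: unboundedness.} We show that $D(B)\neq\mathbb{X}$. Take $y=b_+\in \mathrm{L}^1(\mathbb{R})$. Then
\[
\int_{\mathbb{R}}|b(x)b_+(x)|\,dx\ \ge\ \sum_{n\ge 1} n^4\cdot n^{-4}=\infty ,
\]
so $b_+\notin D(B)$, exactly as in the previous example. An everywhere-defined bounded operator on $\mathbb{X}$ would have domain $\mathbb{X}$, so $B$ is not bounded.

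\emph{Main obstacle.} The only delicate point is reconciling the constant $\omega_1$ in \eqref{5-26} with the $\omega_0$ in the definition of $\mathcal{GL}_A(\mathbb{X})$. Because $\|T_A(t)\|\le1$, we can take any $\omega_0\ge 0$, so the adjustment described in Step 1 resolves it. It also helps that $B_+(\sqrt{\mu})$ decays rapidly, since $b_+$ vanishes near $0$.
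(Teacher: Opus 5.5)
Your argument is the same as the paper's: you get $B\in\mathcal{GL}_A(\mathbb{X})$ from \eqref{5-26} and then apply Theorem~\ref{the gen}. It therefore inherits the paper's defect. \eqref{5-26} is false, so $B\notin\mathcal{GL}_A(\mathbb{X})$ and Theorem~\ref{the gen} cannot be applied. Your handling of the $\omega_0$ versus $\omega_1$ mismatch is correct but beside the point.

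To see why \eqref{5-26} fails, suppose $\sup_{\mu>\omega}(\mu-\omega)\|CR(\mu,A)\|=:K<\infty$. Then for every $y\in D(A)$
\[
\|Cy\|=\|CR(\mu,A)(\mu-A)y\|\le \frac{K}{\mu-\omega}\bigl(\mu\|y\|+\|Ay\|\bigr)\longrightarrow K\|y\| \qquad (\mu\to\infty).
\]
So every element of $\mathcal{GL}_A(\mathbb{X})$ is bounded on $D(A)$ in the norm of $\mathbb{X}$. But $B$ is not: a smooth $y\ge 0$ with $\|y\|_1=1$ supported in $[n,n+n^{-4}]$ belongs to $D(A)$ and has $\|By\|_1=n^2$.

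Concretely, $BR(\mu,A)$ has the nonnegative kernel $\frac{b(t)}{2\sqrt{\mu}}e^{-\sqrt{\mu}|t-s|}$. Its norm on $\mathrm{L}^1(\mathbb{R})$ is therefore $\sup_s\frac{1}{2\sqrt{\mu}}\int b(t)e^{-\sqrt{\mu}|t-s|}\,dt$. Taking $s=n$ with $n^8\le\mu$ gives $\mu\|BR(\mu,A)\|\ge\frac{1-e^{-1}}{2}\,n^2$, so $\mu\|BR(\mu,A)\|$ grows at least like a constant times $\mu^{1/4}$.

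The slip upstream is in \eqref{5-21}. After the substitution $u=t-\eta$, the variable $u$ runs over $(-\infty,t]$, not $(-\infty,0]$, so the double integral does not factor as $B_+(\sqrt{\mu})\|f\|_1$. The norm is controlled by the local mass of the tall spikes of $b$, not by its Laplace transform. The rapid decay of $B_+(\sqrt{\mu})$ that you invoke is therefore irrelevant. Note also that the strong form of your Step~2, namely that $B$ is unbounded already on $D(A)$, is exactly what rules out Step~1.

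The generation claim is nevertheless true, but it needs a genuinely different tool, for instance the Miyadera--Voigt perturbation theorem. For the heat semigroup $T(s)$ one has $\|BT(s)x\|_1\le\|b\|_1(4\pi s)^{-1/2}\|x\|_1$. Hence $\int_0^{t_0}\|BT(s)x\|_1\,ds\le\|b\|_1\sqrt{t_0/\pi}\,\|x\|_1$, which is at most $q\|x\|_1$ with $q<1$ for small $t_0$. You also need $B$ to be $A$-bounded. This holds because the graph norm of $A$ is equivalent to the $\mathrm{W}^{2,1}$-norm and $\mathrm{W}^{2,1}(\mathbb{R})\hookrightarrow\mathrm{L}^\infty(\mathbb{R})$. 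Together these show that $A+B$ with domain $D(A)$ generates a $\mathrm{C}_0$-semigroup.

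For the unboundedness part, $D(B)\neq\mathbb{X}$ by itself does not show that $B$ is unbounded. You need in addition that the maximal multiplication operator is closed, or you can use the bump functions above directly.
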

\begin{proof}
As $D(B)\supset D(A)$ and \eqref{5-26} is valid, the condition of Theorem 2.4 in \cite[Theorem 2.4]{buihuyluomin} is satisfied. This implies that the perturbation operator $A+B$ generates a $\mathrm{C}_0$-semigroup.
\end{proof}
The above-mentioned operator $B$ is an example of an unbounded operator in $\mathbb{X}$ that is in $\mathcal{GL}_A(\mathbb{X})$.

Next, we will consider the nonautonomous perturbation $A+B(t)$.
\begin{claim}
The operator $B(t)$, $t\in \mathbb{R}$, satisfies \ref{ItemA1} and \ref{ItemA2} for each interval $[a,b]\subset \mathbb R$, so Eq.~\eqref{exa 3} has a unique solution $u(x,t)$, $t\ge s$, for each $\varphi (\cdot)$ in $\mathbb{X}$.
\end{claim}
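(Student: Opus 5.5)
The plan is to reduce everything to the estimate \eqref{5-26} for the fixed operator $B$, using that $B(t)=\sin(t)B$ depends on $t$ only through a scalar factor. The key observation is that for any $t,\tau\in\mathbb{R}$ one has $B(t)-B(\tau)=(\sin t-\sin\tau)B$, again with domain containing $D(A)$. Since $A$ is dissipative here (the resolvent bound $\|R(\mu,A)\|\le 1/\mu$ from the previous claim gives $M=1$, $\omega_0=0$), the norm in $\mathcal{GL}_A(\mathbb{X})$ is $\|C\|_A=\sup_{\mu>0}\mu\|CR(\mu,A)\|$.

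\textbf{Step 1: $\|B\|_A<\infty$.} By \eqref{5-26}, $\mu\|BR(\mu,A)\|\le 4K\mu/(\mu-\omega_1)$ for $\mu>\omega_1$, which is bounded for $\mu\ge 2\omega_1$. For $0<\mu\le 2\omega_1$ I write
\[
BR(\mu,A)=BR(\lambda,A)\,[I+(\lambda-\mu)R(\mu,A)]
\]
with $\lambda:=2\omega_1+1$, using the resolvent identity. This gives $\mu\|BR(\mu,A)\|\le 2\omega_1\|BR(\lambda,A)\|(1+\lambda/\mu)\mu$, which is bounded as well. Hence $B\in\mathcal{GL}_A(\mathbb{X})$.

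\textbf{Step 2: \ref{ItemA1}.} From Step 1,
\[
\|B(t)-B(\tau)\|_A=|\sin t-\sin\tau|\,\|B\|_A\le|t-\tau|\,\|B\|_A,
\]
so $t\mapsto B(t)$ is Lipschitz, hence continuous, into $\mathcal{GL}_A(\mathbb{X})$ on every $[a,b]$.

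\textbf{Step 3: \ref{ItemA2}.} For fixed $\mu$, $B(t)R(\mu,A)=\sin(t)\,BR(\mu,A)$ is a scalar $C^\infty$ function times a fixed bounded operator. Its derivative is $\cos(t)BR(\mu,A)$, which is norm continuous, and $\sup_t\|\tfrac{d}{dt}[B(t)R(\mu,A)]\|\le\|BR(\mu,A)\|\le 4K/(\mu-\omega_1)\to0$. So the limsup in \ref{ItemA2} is $0$.

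\textbf{Step 4: conclusion.} Apply Theorem~\ref{the main} on each $[a,b]$ to obtain existence and uniqueness, and Theorem~\ref{the R} to glue the pieces to an evolution family $U(t,s)$ on $\mathbb{R}$. Uniqueness on each compact interval makes the gluing consistent. Then $u(\cdot,t)=U(t,s)\varphi$ is the solution of \eqref{exa 3}, classical for $\varphi\in D(A)$ and mild for general $\varphi\in\mathbb{X}$.

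The main obstacle is the justification of \eqref{5-26} itself, since everything else is formal. In particular, the bound $|B_+(s)|\le K/(s^2-\omega_1)$ on the Laplace transform of $b_+$ must be checked. Because $b_+$ vanishes on $[0,1]$, I would verify it directly rather than through the Tauberian citation: $B_+(s)=\int_1^\infty e^{-st}b_+(t)\,dt\le e^{-s}\|b_+\|_1$, which decays faster than any power, in particular faster than $1/(s^2-\omega_1)$. I would also double-check the change of variables in \eqref{5-21}--\eqref{5-22}. If a cleaner bound is needed, I would instead estimate
\[
\|BR(\mu,A)f\|_1\le\|b\|_1\,\|R(\mu,A)f\|_\infty\le\|b\|_1\,\frac{1}{2\sqrt\mu}\,\|f\|_1.
\]
This gives only $\mu^{-1/2}$ decay, which is too weak for membership in $\mathcal{GL}_A$. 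So the gap at $t=0$ in the support of $b$, exploited via the exponential kernel, is essential, and that is where care is needed.
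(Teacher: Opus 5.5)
Your Steps~2--4 are the paper's own argument: the scalar factor $\sin t$, the derivative $\cos t\,BR(\mu,A)$, then Theorem~\ref{the main}. Your small-$\mu$ resolvent-identity step is a detail the paper omits. The gap is the input both arguments take for granted: \eqref{5-26} is false, so your Step~1 fails, and in fact $B\notin\mathcal{GL}_A(\mathbb{X})$.

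Since $R(\mu,A)$ is convolution with $G_\mu(x)=e^{-\sqrt{\mu}|x|}/(2\sqrt{\mu})$, the operator $BR(\mu,A)$ is the integral operator with kernel $b(t)G_\mu(t-s)$. Testing with $f=\varepsilon^{-1}\chi_{[s-\varepsilon,s]}$ and letting $\varepsilon\to0$ gives
\[
\|BR(\mu,A)\|\ \ge\ \int_{\mathbb{R}}|b(t)|\,G_\mu(t-s)\,dt\qquad\text{for every } s\in\mathbb{R}.
\]
Take $s=n:=\lfloor\mu^{1/8}\rfloor$, so that $\sqrt{\mu}\,n^{-4}\ge1$. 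The spike of $b$ on $[n,n+n^{-4}]$ alone then yields
\[
\mu\|BR(\mu,A)\|\ \ge\ \frac{n^{2}}{2}\bigl(1-e^{-\sqrt{\mu}\,n^{-4}}\bigr)\ \ge\ \frac{1-e^{-1}}{2}\,\lfloor\mu^{1/8}\rfloor^{2}\ \longrightarrow\ \infty .
\]
More structurally, $\mu G_\mu$ is an approximate identity, so $(\mu G_\mu*|b|)(s)\to|b(s)|$ at Lebesgue points. Thus $\|B\|_A<\infty$ would force $b\in\mathrm{L}^\infty(\mathbb{R})$, i.e.\ $B$ bounded. Hence $B(t)=\sin(t)B\notin\mathcal{GL}_A(\mathbb{X})$ whenever $\sin t\neq0$, so \ref{ItemA1} fails on every nondegenerate interval and Theorem~\ref{the main} cannot be invoked. \ref{ItemA2} itself is fine: your bound $\|BR(\mu,A)\|\le\|b\|_1/(2\sqrt{\mu})$ already gives it.

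The mechanism you single out, the gap of the support of $b$ on $[0,1]$ seen through $B_+(\sqrt{\mu})$, is illusory. The resolvent kernel is translation invariant and sees the spikes of $b$ wherever they sit. The Laplace transform enters \eqref{5-21} only through a faulty change of variables. After $u=t-\eta$, the variable $u$ ranges over $(-\infty,t]$, not $(-\infty,0]$, so $e^{-\sqrt{\mu}(t-u)}$ does not split into $e^{-\sqrt{\mu}t}$ times a factor bounded by $1$.

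Your own estimate $B_+(\sqrt{\mu})\le e^{-\sqrt{\mu}}\|b_+\|_1$ should have been the warning sign. With it, \eqref{5-21}--\eqref{5-22} would make $\|BR(\mu,A)\|$ decay faster than any power of $\mu$. Yet $b=1$ on $[1,2]$, and the first display with $s=1$ gives $\|BR(\mu,A)\|\ge(1-e^{-\sqrt{\mu}})/(2\mu)$.

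So the first assertion of the claim is false, and no argument through $\mathcal{GL}_A(\mathbb{X})$ can reach the conclusion about \eqref{exa 3} for this unbounded $b$. If you want that conclusion, you need a different framework, e.g.\ Miyadera--Voigt type perturbation theory. Here $\|BT_A(t)\|\le\|b\|_1(4\pi t)^{-1/2}$, so $\int_0^\alpha\|BT_A(t)\|\,dt\le\|b\|_1\sqrt{\alpha/\pi}$ is small for small $\alpha$, even though $B\notin\mathcal{GL}_A(\mathbb{X})$.
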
	
\begin{proof}
By \eqref{5-26}, it is easy to see that $B(\cdot)$ satisfies \ref{ItemA1} on each $[a,b]\subset \mathbb R$ because 
\[
B(t)=\sin(t)B
\]
and the continuity of $B(\cdot)$ follows from the fact that $\sin(t)$ is continuous in $t\in \mathbb{R}$. Also, note that $B(t)R(\mu,A)$ is continuously differentiable in $t$, and as
\[
B'(t)=\cos (t)B
\]
and by \eqref{5-26} we see that
\begin{align*}
\limsup_{\mu\to \infty} \sup_{t\in\mathbb{R}} \left \| \frac{d}{dt}\left[B(t)R(\mu,A)\right]\right \| &\le 
\limsup_{\mu\to \infty} \sup_{t\in\mathbb{R}} \| BR(\mu,A)\| \\
&\le 	\limsup_{\mu\to \infty}   \frac{4K}{\mu-\omega_1}
\\
& = 0.
\end{align*}
Finally, Theorem \ref{the main} applies.
\end{proof}

\subsection*{Acknowledgements}
The first-named author (X.-Q. Bui) thanks the Vietnam Institute for Advanced Study in Mathematics (VIASM) for financial support and a stimulating working environment.

\subsection*{Data availability statement}
No data was used for the research described in the manuscript.

\subsection*{Declarations}
\paragraph*{Conflict of interest}
The authors declare that there is no conflict of interest.

%\bibliographystyle{amsplain}
 
% =============================
% =============================
\end{document}